\DeclareTextFontCommand{\textcyr}{\fontencoding{OT2}
    \fontfamily{wncyr}\fontseries{m}\fontshape{n}\selectfont}
\newcommand{\Ch}{\!\!\!\textcyr{Ch}}
\DeclareSymbolFont{rsfs}{U}{rsfs}{m}{n}
\DeclareSymbolFontAlphabet{\mathrsfs}{rsfs}
\theoremstyle{plain}
\newtheorem{theorem}{Theorem}[section]
\newtheorem{proposition}[theorem]{Proposition}
\newtheorem{lemma}[theorem]{Lemma}
\newtheorem{corollary}[theorem]{Corollary}
\theoremstyle{definition}
\newtheorem{definition}[theorem]{Definition}
\newtheorem{subsec}[theorem]{}
\newcommand{\F}{{\mathds F}}
\newcommand{\Z}{{\mathds Z}}
\newcommand{\GG}{{\mathds G}}
\newcommand{\ov}{\overline}
\newcommand{\lra}{\longrightarrow}
\DeclareMathOperator{\coker}{coker}
\DeclareMathOperator{\Gal}{Gal}
\DeclareMathOperator{\Hom}{Hom}
\DeclareMathOperator{\im}{im}
\newcommand{\sss}{{\rm ss}}
\newcommand{\ssc}{{\rm sc}}
\newcommand{\into}{\hookrightarrow}
\newcommand{\onto}{\twoheadrightarrow}
\newcommand{\labelt}[1]{\xrightarrow{\makebox[1.2em]{\scriptsize ${#1}$}}}
\newcommand{\labeltoooo}[1]{\xrightarrow{\makebox[3.5em]{\scriptsize ${#1}$}}}
\newcommand{\longisoto}{{\ \labelt{\raisebox{-1.ex}{$\sim$}}\ }}
\newcommand{\isoto}{\longisoto}
\newcommand{\hs}{\kern 0.8pt}
\newcommand{\hssh}{\kern 1.2pt}
\newcommand{\hshs}{\kern 1.6pt}
\newcommand{\hssss}{\kern 2.0pt}
\newcommand{\hm}{\kern -0.8pt}
\newcommand{\hmm}{\kern -1.2pt}
\newcommand{\emm}{\bfseries}
\newcommand{\X}{{{\sf X}}}
\newcommand{\V}{\mathcal{V}}
\newcommand{\tor}{{\rm tor}}
\newcommand{\mm}{{\rm m}}
\newcommand{\loc}{{\rm loc}}
\newcommand{\alg}{{\rm alg}}
\newcommand{\GmL}{{{\mathds G}_{\mm,L}}}
\newcommand{\G}{\Gamma}
\newcommand{\tors}{{\rm Tors}}
\newcommand{\Gt}{{\G\hm,\hs\tors}}
\newcommand{\D}{{\Delta}}
\def\Tors{{\rm Tors}}
\def\bv{{\breve{v}}}
\def\Gbvt{{\G\hm_\bv\hs,\Tors}}
\def\sk{\smallskip}
\renewcommand{\varpi}{\pi}
\newcommand{\pia}{{\pi_1^\alg}}
\def\nc{{\rm nc}}
\def\Gm{{\GG_{\rm m}}}
\newcommand{\Y}{Y}
\newcommand{\Ups}{{\Upsilon\!}}
\def\Gbv{{\G\hm_{w}}}
\def\bv{w}
\def\D{{\Delta}}
\begin{document}

\title[Defect of weak approximation]
{The defect of weak approximation\\ for a reductive group\\ over a global field}

\author{Mikhail Borovoi\\
{\tiny with an appendix by}\\
Jean-Louis Colliot-Th\'el\`ene}

\address{Raymond and Beverly Sackler School of Mathematical Sciences,
Tel Aviv University,
Ramat Aviv,
 6997801 Tel Aviv, Israel}
\email{borovoi@tauex.tau.ac.il}

\address{Laboratoire de Math\'ematiques d'Orsay, Universit\'e Paris-Saclay, CNRS, Orsay, France}
\email{jean-louis.colliot-thelene@universite-paris-saclay.fr}

\thanks{The author was partially supported
by the Israel Science Foundation (grant 1030/22).}

\keywords{Weak approximation, Galois cohomology, reductive groups, global fields}

\subjclass{
  11E72% Galois cohomology of linear algebraic groups
, 20G10% Cohomology theory for linear algebraic groups
%, 20G20% Linear algebraic groups over the reals, the complexes, the quaternions
%, 20G25% Linear algebraic groups over local fields and their integers
, 20G30% Linear algebraic groups over global fields and their integers
}

\date{\today}

\begin{abstract}
We compute the defect of weak approximation for a reductive group $G$ over a global field $K$
in terms of the algebraic fundamental group of $G$.
\end{abstract}

\begin{comment}   %Abstract for arXiv
We compute the defect of weak approximation for a reductive group G over a global field K
in terms of the algebraic fundamental group of G.
\end{comment}

\maketitle

\tableofcontents

\section{Introduction}
\label{s:Intro}

\begin{subsec}
Let $K$ be a global field (a number field or a global function field), and let $K^s$ be a fixed separable closure of $K$.
Let $G$ be a  reductive group over $K$
(we follow the convention of SGA3, where reductive groups are assumed to be connected).
Let $\V_K$  denote the set of places of $K$, and
let $S\subset\V_K$ be a finite set of places.
Consider the group
\[ G(K_S)\coloneqq \prod_{v\in S} G(K_v)\]
where $K_v$ denotes the completion of $K$ at $v$.
The group $G(K)$ embeds diagonally into $G(K_S)$.
One says that $G$ has the {\em weak approximation property in $S$}  (for short (WA$_S$)\hs)  if $G(K)$ is dense in $G(K_S)$.
One says that $G$ has the {\em weak approximation property} if it has the weak approximation property in $S$
for any finite subset $S\subset\V_K$.

Without assuming that $G$ has the weak approximation property in $S$,
let $\ov{G(K)}_S$ denote the closure of $G(K)$ in $G(K_S)$.
Sansuc \cite[\S\hs3]{Sansuc} showed (in the number field case)
that the subgroup $\ov{G(K)}_S$ is normal in $G(K_S)$ and that the quotient group
\[A_S(G) \coloneqq G(K_S)/\ov{G(K)}_S\]
is a finite abelian group.
Sansuc's argument extends to the function field case.
We say that $A_S(G)$ is the {\em defect of weak approximation for $G$ in $S$}.
\end{subsec}

\begin{subsec}
In the number field case, Sansuc computed $A_S(G)$  when $G$ is semisimple,
or, more generally, when $G$ admits a {\em special covering},
that is,  $G$  fits into a short exact sequence of special kind
\begin{equation}\label{e:special}
1\to B\to G'\to G\to 1.
\end{equation}
Here $G'$ is the product  of a simply connected semisimple $K$-group and a quasi-trivial $K$-torus,
and $B$ is a finite abelian $K$-group.
Namely, Sansuc constructed an isomorphism
\begin{equation}\label{e:special-AS(G)}
A_S(G)\isoto \Ch^1_S(B)\coloneqq \coker\Big[H^1(K,B)\to\prod_{v\in S} H^1(K_v,B)\Big].
\end{equation}
Note that there exist reductive $K$-groups not admitting a special covering.
\end{subsec}

\begin{subsec}\label{ss:CT}
In the number field case, Colliot-Th\'el\`ene \cite[Theorem 9.4(i)]{CT-RF}
computed the finite group $A(G)\coloneqq\varprojlim_S A_S(G)$
for any reductive $K$-group $G$.
He considered a {\em flasque resolution} of $G$, that is,  a short exact sequence \eqref{e:special}
in which $G'$ is a {\em quasi-trivial} reductive $K$-group
(see \cite[\S\hs2]{CT-RF} or Definition \ref{d:qt} below)
and $B$ is a {\em flasque} $K$-torus
(\hs see \cite[0.8]{CT-RF}\hs).
Colliot-Th\'el\`ene computed $A(G)$ by a formula similar to \eqref{e:special-AS(G)}.
\end{subsec}

\begin{subsec}
In this paper,
we compute the defect of weak approximation  $A_S(G)$  for a reductive $K$-group $G$
in terms of the algebraic fundamental group  $\pia(G)$
introduced in \cite[Section 1]{Borovoi-Memoir}
(and also by Merkurjev \cite{Merkurjev} and Colliot-Th\'el\`ene \cite{CT-RF}).
Let $G^\sss=[G,G]$ denote the derived group of $G$, which is semisimple, and let $G^\ssc$
denote the universal cover of  $G^\sss$, which is simply connected;
see \cite[Proposition (2:24)(ii)]{Borel-Tits-C} or \cite[Corollary A.4.11]{CGP}.
Consider the composite homomorphism
\[\rho\colon G^\ssc\onto G^\sss\into G,\]
which is in general is neither injective nor surjective.
For a maximal torus  $T\subseteq G$,
we denote
\[T^\ssc=\rho^{-1}(T)\subseteq G^\ssc.\]
Following \cite{Borovoi-Memoir}, we consider the {\em algebraic fundamental group of $G$}  defined by
\[\pia (G)=\X_*(T)/\rho_*\X_*(T^\ssc)\]
where $\X_*$ denotes the cocharacter group over $K^s$.
The absolute  Galois group $\Gal(K^s/K)$ naturally acts on $\pia(G)$,
and the Galois module $\pia(G)$ is well defined (does not depend on the choice of $T$
up to a transitive system of isomorphisms);
see \cite[Lemma 1.2]{Borovoi-Memoir}.
Note that when $G=T$ is a torus, we have $\pia(G)=\X_*(T)$.

Write $M=\pia(G)$.
Let $T\subseteq G$ be a maximal torus.
Choose a finite Galois extension $L/K$ in $K^s$ splitting $T$.
Set $\G=\Gal(L/K)$, the Galois group of $L$ over $K$.
Then the finite group  $\G$ naturally acts on $M$.
Moreover,  $\G$ naturally acts on $L$ and on its set of places $\V_L$.
We compute $A_S(G)$ in terms of the $\G$-module $M$ and the $\G$-set $\V_L$.
\end{subsec}

\begin{subsec}
Following Tate \cite{Tate}, we consider the group of finite formal linear combinations
\[ M[\V_L]=\Bigg\{\sum_{w\in\V_L} m_w\cdot w\ \big|\ m_w\in M\Bigg\}\]
and its subgroup
\[M[\V_L]_0=\Big\{\sum m_w\cdot w\in M[\V_L] \ \big|\ \sum m_w=0\Big\}. \]
Then $\G$ naturally acts on the groups $ M[\V_L]$ and $ M[\V_L]_0$.
We prove the following theorem.
\end{subsec}

\begin{theorem}[Theorem \ref{t:AS-H1}]
\label{t:AS-H1-Intro}
\[ A_S(G)\cong\coker\Big[H_1(\G, M[\V_L]_0)\to \bigoplus_{v\in S} H_1(\Gbv, M)\Big]\]
where $H_1$ denotes  group homology.
Here for $v\in \V_K$, we choose a place $\bv\in\V_L$ over $v$,
and we denote by $\Gbv$ the corresponding decomposition group
(the stabilizer of $\bv$ in $\G$).
\end{theorem}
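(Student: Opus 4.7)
My plan is to reduce the problem to the case of a torus via a flasque resolution of $G$, and then to combine local Tate--Nakayama duality with the combinatorial exact sequence of $\G$-modules involving $\V_L$ to translate the result into group homology of $M = \pia(G)$. Concretely, I would fix a flasque resolution $1 \to F \to H \to G \to 1$ with $H$ a quasi-trivial reductive $K$-group and $F$ a flasque $K$-torus. Since $F$ is central in $H$ and $H^\sss$ is simply connected, one has $H^\ssc = G^\ssc$, and functoriality of $\pia$ yields a short exact sequence of $\G$-modules
\[ 0 \longrightarrow \X_*(F) \longrightarrow \pia(H) \longrightarrow M \longrightarrow 0 \]
with $\pia(H) = \X_*(H^\ab)$ a permutation $\G$-module (because $H$ is quasi-trivial). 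Quasi-triviality of $H$ also implies that $H$ satisfies weak approximation. A diagram chase on the long exact Galois cohomology sequences of the resolution over $K$ and over each $K_v$, in the spirit of Sansuc's argument \eqref{e:special-AS(G)} and Colliot-Th\'el\`ene's \cite[Theorem 9.4]{CT-RF}, then gives a natural isomorphism
\[ A_S(G) \;\cong\; \coker\Big[ H^1(K, F) \longrightarrow \bigoplus_{v\in S} H^1(K_v, F) \Big]. \]

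The second step converts this cokernel into group homology of $M$. Local Tate--Nakayama duality rewrites each local term as $H^1(K_v, F) \cong \hat H^{-1}(\Gbv, \X_*(F))$. To bring $M$ into the picture, I would combine the permutation SES above with the combinatorial short exact sequence of $\G$-modules
\[ 0 \longrightarrow M[\V_L]_0 \longrightarrow M[\V_L] \longrightarrow M \longrightarrow 0 \]
(the right-hand map being the sum), and use Shapiro's lemma for $M[\V_L] = \bigoplus_{w \in \V_L} M$, viewed as the induced module on the permutation $\G$-set $\V_L$: this gives $H_\b(\G, M[\V_L]) \cong \bigoplus_{v \in \V_K} H_\b(\Gbv, M)$. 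The connecting homomorphism of the long exact homology sequence, composed with projection onto the places in $S$, is precisely the map appearing in the statement of the theorem. Matching this to the cohomological cokernel from the previous step via global Tate--Poitou duality for $F$ then produces the claimed isomorphism.

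The most delicate step will be the compatibility of the two descriptions of the global term: the cokernel of $H^1(K, F) \to \bigoplus_{v \in S} H^1(K_v, F)$ coming from the flasque resolution, versus the homology cokernel involving $M[\V_L]_0$. The permutation $\G$-module $\pia(H) = \X_*(H^\ab)$ has well-behaved Tate cohomology both globally (by Shapiro) and locally (norm maps on induced modules are transparent), so the Tate cohomology of $\X_*(F)$ and the homology of $M$ should differ only by permutation-module corrections that cancel in the final cokernel involving the places in $S$. Pinning this cancellation down, and verifying independence of the choice of $\bv \in \V_L$ above each $v \in S$, is the main bookkeeping challenge; once resolved, the theorem follows from a five-lemma diagram chase.
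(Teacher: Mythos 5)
Your overall strategy---resolve $G$ by a quasi-trivial group, identify $A_S(G)$ with $\Ch^1_S$ of the kernel torus via a generalization of Sansuc and Colliot-Th\'el\`ene (the paper's Theorem \ref{t:FR}), and then translate through Tate's theorem and the sequence $0\to M[\V_L]_0\to M[\V_L]\to M\to 0$ into homology of $M$---matches the paper's. But there is a genuine gap at exactly the point you defer as ``the main bookkeeping challenge,'' and it does not resolve itself by a five-lemma chase. With a flasque resolution, $\pia(H)$ is only a \emph{permutation} $\Z[\G]$-module, say containing a summand $\Z[\G/\D]$; then $H_1(\G\hm_w,\pia(H))$ contains summands of the form $(\G\hm_w\cap x\D x^{-1})^{\ab}$, which are nonzero in general. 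Consequently the connecting map $H_1(\G\hm_w,M)\to H^{-1}(\G\hm_w,\X_*(F))=\X_*(F)_{\G\hm_w,\Tors}$ is only surjective, with kernel the image of $H_1(\G\hm_w,\pia(H))$, and likewise the global map $H_1(\G,M[\V_L]_0)\to\big(\X_*(F)[\V_L]_0\big)_{\Gt}$ has kernel the image of $H_1(\G,\pia(H)[\V_L]_0)$. To conclude that the two cokernels agree you would have to show that $\bigoplus_{v\in S}\im\big[H_1(\G\hm_w,\pia(H))\to H_1(\G\hm_w,M)\big]$ lies in the image of $H_1(\G,M[\V_L]_0)$; Shapiro's lemma only produces lifts to $H_1(\G,M[\V_L])$, not to the degree-zero subgroup $M[\V_L]_0$, so the asserted cancellation is not automatic and is not proved in your sketch.

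The paper circumvents this entirely by introducing \emph{$L/K$-free resolutions} (Definitions \ref{d:L/K-free-group} and \ref{d:L/K-free}, together with the existence result that follows them): one arranges that $\pia(G')$ is a \emph{free} $\Z[\G]$-module, not merely a permutation module. Then Lemma \ref{l:Gt0} and Corollary \ref{c:Gt0} give $H_1(\G\hm_w,\pia(G'))=0$, $(\pia(G'))_{\Gbvt}=0$, $H_1(\G,\pia(G')[\V_L]_0)=0$, and $\big(\pia(G')[\V_L]_0\big)_{\Gt}=0$, so all the relevant connecting maps are isomorphisms and the comparison of cokernels in Theorems \ref{t:AS-0} and \ref{t:AS-H1} is immediate. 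This construction is the key idea missing from your proposal; if you insist on a flasque resolution, you must supply the cancellation argument, which is the real content of the step. (A minor point: the global identification used here is Tate's theorem \cite[p.~717]{Tate} computing $H^1(L/K,B)$ via $Y[\V_L]_0$, not Poitou--Tate duality.)
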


Theorem \ref{t:AS-H1-Intro} describes $A_S(G)$ in terms of the finite groups $ H_1(\Gbv, M)$
and the infinite group $H_1(\G, M[\V_L]_0$.
Our main result is Theorem \ref{t:AS-H1-cor} in  Section \ref{s:main},
which describes  $A_S(G)$ in terms of  finite groups only.

The plan of the rest of the paper is as follows.
In Section \ref{s:QT},
for a reductive group $G$ over $K$ splitting over a finite Galois extension $L/K$,
we introduce the notion of an $L/K$-free resolution of $G$,
and we show that $G$ admits  an $L/K$-free resolution.

In Section \ref{s:Ch} we compute $\Ch^1_S(B)$ for a $K$-torus $B$
in terns of $\pia(B)=\X_*(B)$.
In Section \ref{s:main} we prove Theorems \ref{t:AS-H1} and \ref{t:AS-H1-cor},
which are our main results.
In Section \ref{s:example} we consider an example over a global function field $K$:
we compute $A_S(T)$ for a certain $K$-torus $T$.
In Appendix \ref{app:CT}, J.-L.~Colliot-Th\'el\`ene gives an alternative proof
of the existence of an $L/K$-free resolution of a reductive $K$-group
splitting over a finite Galois extension $L/K$.

The arXiv version of this paper contains Appendix B. % \ref{app:Magma}.
We provide a listing of our Magma program computing $A_S(G)$.
Our input is the effective Galois group $\G$
acting on the algebraic fundamental group $M=\pia(G)$,
the list of  non-cyclic decomposition groups $\G\hm_w$ for $v\in S$,
and the list of  non-cyclic decomposition groups for $v\in S^\complement\coloneqq \V_K\smallsetminus S$.
We compute $A_S(G)$ using Theorem \ref{t:AS-H1-cor}.

\bigskip
\noindent
{\sc Acknowledgements:}\ \
The author is grateful to Dipendra Prasad for his suggestion
to compute the defect of weak approximation for a reductive group.
We thank  Boris Kunyavski\u{\i} for helpful discussions and  email correspondence.
Special thanks to Jean-Louis Colliot-Th\'el\`ene for writing Appendix A.
This paper was partly written during the author's visit to Max Planck Institute for Mathematics, Bonn,
and the author is grateful to this  institute for its  hospitality, support, and excellent working conditions.

\section{Quasi-trivial groups and $L/K$-free groups}
\label{s:QT}

\begin{subsec}
A torus $T$ over a field $K$ is called {\em quasi-trivial} if its cocharacter group
$\X_*(T)$ admits a $\Gal(K^s/K)$-stable basis.
Then by Shapiro's lemma and Hilbert's Theorem 90 we have $H^1(K,T)=1$.
If $K$ is a global field, then $T$ has the weak approximation property
(because a quasi-trivial torus  is a $K$-rational variety).
\end{subsec}

We need the following two known theorems.

\begin{theorem}\label{t:scss}
Let $G$ be a simply connected semisimple group over a global field $K$, and let
$\V_f(K)$ and $\V_\infty(K)$ denote the sets of finite (non-archimedean)
and infinite (archimedean) places of $K$, respectively.
Then:
\begin{enumerate}
\item[(\rm i)] For all $v\in\V_f(K)$ we have $H^1(K_v,G)=1$.

\item[(\rm ii)] The localization map
\[\eta\colon\,H^1(K,G)\to \prod_{v\in\V(K)} H^1(K_v,G)=\!\!\prod_{v\in\V_\infty(K)} H^1(K_v,G)\]	
is bijective.
\end{enumerate}
\end{theorem}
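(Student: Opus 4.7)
The plan is to reduce the statement to the case of an absolutely almost simple simply connected group, and then invoke the classical theorems of Kneser, Bruhat--Tits--Rousseau, Harder, and Chernousov.

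First I would use the structural decomposition of simply connected semisimple groups: every such $K$-group decomposes canonically as $G = \prod_i R_{L_i/K}(G_i)$, where each $L_i/K$ is a finite separable extension and each $G_i$ is absolutely almost simple and simply connected over $L_i$. Shapiro's lemma for non-abelian cohomology identifies
\[
H^1(K, R_{L_i/K}(G_i)) \cong H^1(L_i, G_i),
\]
and analogously for completions, using $L_i \otimes_K K_v \cong \prod_{w \mid v} (L_i)_w$. This reduces both (i) and (ii) to the absolutely almost simple simply connected case.

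For part (i), over a non-archimedean local field the vanishing $H^1(K_v, G) = 1$ is Kneser's theorem in characteristic zero and the theorem of Bruhat--Tits and Rousseau in positive characteristic. A uniform route is through the Bruhat--Tits building $\mathcal B$: any Galois cocycle has bounded image in $\Aut(\mathcal B)$ and hence a fixed facet, so the twisted parahoric has reductive quotient a semisimple group over a finite residue field, whose first Galois cohomology vanishes by Lang's theorem. For part (ii), the injectivity of $\eta$ is the Hasse principle, proved type by type: Kneser for the classical types $A_n$, $B_n$, $C_n$, and non-trialitarian $D_n$, by identifying the simply connected covers with norm-one groups of involutive algebras and reducing to Hasse--Minkowski and its Hermitian analogues; Harder for the exceptional types $G_2$, $F_4$, $E_6$, $E_7$ and for trialitarian $D_4$ via parabolic cohomology; and Chernousov for $E_8$ using the Rost invariant in $H^3(K, \Q/\Z(2))$. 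Surjectivity of $\eta$ then follows by constructing, for any compatible family of classes at archimedean places, a global cocycle with the prescribed local specializations, using a maximal $K$-torus containing a suitable twist and weak approximation on its algebraic fundamental group.

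The hard part is the case of $E_8$ in part (ii). This type lacks the convenient low-dimensional representations that underlie Kneser's classical arguments, and the parabolic-cohomology techniques of Harder do not carry over in the absence of an appropriate $K$-form with a proper parabolic. Chernousov's introduction of the Rost invariant, together with detailed case analysis of its kernel for $E_8$, was the essential innovation that resolved this final case and completed the proof of the theorem.
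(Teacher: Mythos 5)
Your proposal is correct and consistent with the paper, which proves this theorem purely by citation (Kneser and Bruhat--Tits for (i); the Hasse principle of Kneser, Harder, and Chernousov for (ii), via Platonov--Rapinchuk in the number field case and Harder in the function field case); your reduction to the absolutely almost simple case via Weil restriction and the type-by-type survey is an accurate summary of how those cited results are themselves established. The only point to flag is that in the function field case $\V_\infty(K)=\emptyset$, so assertion (ii) there says $H^1(K,G)=1$ outright --- this is Harder's theorem --- whereas your sketch of injectivity and surjectivity is phrased around the archimedean places of a number field.
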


\begin{proof}
Assertion (\rm i) is a theorem of Kneser and of Bruhat and Tits;
see Platonov and Rapinchuk \cite[Theorem 6.4, p.~284]{PR} for the number field case,
and Bruhat and Tits \cite[Theorem 4.7(ii)]{BT} for  the general case.
Assertion (\rm ii) is the celebrated Hasse principle of Kneser, Harder, and Chernousov;
see Platonov and Rapinchuk \cite[Theorem 6.6, p.~286]{PR} for the number field case,
and Harder \cite{Harder} for the function field case.
\end{proof}

\begin{theorem}
Let $G$ be a simply connected semisimple group over a global field $K$.
Then $G$ has the weak approximation property, that is,
for any finite subset $S\subset \V(K)$, the group $G(K)$ is dense in $G(K_S)\coloneqq \prod_{v\in S}G(K_v)$.
\end{theorem}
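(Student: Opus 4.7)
The plan is to combine a structural reduction with classical theorems from the literature. First, I would reduce to the case where $G$ is absolutely almost simple. By the standard structure theorem for simply connected semisimple groups over a field, $G$ decomposes as a direct product $\prod_i R_{L_i/K} G_i$, where each $L_i/K$ is a finite separable extension and each $G_i$ is absolutely almost simple and simply connected over $L_i$. Weak approximation is preserved under direct products, and since
\[ (R_{L/K} G_0)(K_v) = \prod_{w \mid v} G_0(L_w), \qquad (R_{L/K} G_0)(K) = G_0(L), \]
weak approximation for $R_{L/K} G_0$ relative to $S \subset \V(K)$ is equivalent to weak approximation for $G_0$ relative to the set of places of $L$ lying above $S$. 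This lets me assume that $G$ is absolutely almost simple and simply connected.

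Next, I would invoke the classical weak approximation theorem for absolutely almost simple simply connected groups over a global field. In the number field case, this is due to Kneser and Platonov for the classical and most exceptional types, and was completed by Chernousov for type $E_8$; see Platonov--Rapinchuk \cite[Proposition 7.8]{PR}. In the function field case it is due to Harder \cite{Harder}. A unifying conceptual input is strong approximation: when $G$ is $K_v$-isotropic at some place $v$, density of $G(K)$ in the adeles away from a non-compact place immediately yields density in any $G(K_S)$. The remaining cases — the short list of everywhere anisotropic forms — are handled by direct arguments using the explicit description of $G$ via a central simple algebra or a hermitian form, where weak approximation follows from weak approximation for the underlying algebraic data.

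The main obstacle in a fully self-contained treatment would be the anisotropic forms, historically culminating in the $E_8$ case, which resisted standard approaches for many years before Chernousov's solution. For the purposes of this paper the theorem is used only as a quotable black box, paralleling the way Theorem \ref{t:scss} is cited from the same sources.
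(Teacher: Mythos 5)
Your proposal is correct in substance and, like the paper, ultimately treats the result as a quotable consequence of the classical literature; the paper's own proof is a one-liner deducing weak approximation from strong approximation, citing Platonov (see \cite[Theorem 7.12, p.~427]{PR}) in characteristic $0$ and G.~Prasad \cite{GPrasad} in positive characteristic. Two points of comparison. First, your closing worry about ``everywhere anisotropic forms'' needing separate direct arguments is spurious for \emph{weak} approximation: a semisimple $K$-group is quasi-split, hence isotropic, at all but finitely many places, so for any finite $S$ one can choose a place $v_0\notin S$ with $G(K_{v_0})$ non-compact (for every almost $K$-simple factor); strong approximation relative to $\{v_0\}$ then gives density of $G(K)$ in $G(K_S)$ with no residual cases. (Chernousov's $E_8$ theorem enters the \emph{Hasse principle}, i.e.\ Theorem~\ref{t:scss}(ii), not this statement.) Second, your citation of Harder \cite{Harder} for the function field case is misplaced: that paper concerns Galois cohomology and the Hasse principle; the correct reference for strong approximation over global function fields is Prasad \cite{GPrasad}. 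Your preliminary reduction to the absolutely almost simple case via Weil restriction is correct but unnecessary once one argues via strong approximation as above.
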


Indeed, $G$ has the weak approximation property
because it has the strong approximation property.
This was proved  by  Platonov in characteristic 0
(see Platonov and Rapinchuk \cite[Theorem 7.12, p.~427]{PR}),
and by G. Prasad \cite{GPrasad} in positive characteristic.

\begin{definition}[{Colliot-Th\'el\`ene \cite[Definition 2.1]{CT-RF}}]
\label{d:qt}
A reductive group $G$ over a field $K$ is {\em quasi-trivial}
if its commutator subgroup $G^\sss$ is simply connected and the quotient torus
$G^\tor\coloneqq G/G^\tor$ is a quasi-trivial torus.	
In other words, $G$ is quasi-trivial if it fits into the exact sequence
\begin{equation}\label{e:qt}
1\to G^\ssc\to G\to G^\tor\to 1,
\end{equation}
where $G^\ssc$ is a simply connected semisimple group,
and $G^\tor$ is a quasi-trivial torus.	
\end{definition}

\begin{proposition}[{Colliot-Th\'el\`ene \cite[Proposition 9.2(i,iii)]{CT-RF} in the number field case}]
\label{p:HP-qt}
Let $G$ be quasi-trivial group over a global field $K$.
Then:
\begin{enumerate}
\item[(\rm i)] For any $v\in\V_f(K)$ we have $H^1(K_v,G)=1$.

\item[(\rm ii)] The localization map
\[\loc_\infty\colon\,H^1(K,G)\to \!\!\prod_{v\in\V_\infty(K)}\!\!\!\! H^1(K_v,G)\]	
is bijective.
\end{enumerate}
\end{proposition}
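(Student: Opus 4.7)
The plan is to use the defining exact sequence
\[ 1 \to G^\ssc \to G \to G^\tor \to 1 \]
of a quasi-trivial group together with the associated long exact sequence of pointed sets in nonabelian Galois cohomology. The external inputs are: vanishing of $H^1$ for the quasi-trivial torus $G^\tor$ over $K$ and every $K_v$ (Shapiro plus Hilbert~90); vanishing $H^1(K_v, G^\ssc) = 1$ for $v \in \V_f(K)$ from Theorem~\ref{t:scss}(i); the Hasse principle for $G^\ssc$ from Theorem~\ref{t:scss}(ii); and weak approximation for the quasi-trivial torus $G^\tor$.

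For part~(i), fix $v \in \V_f(K)$. The local exact sequence
\[ H^1(K_v, G^\ssc) \lra H^1(K_v, G) \lra H^1(K_v, G^\tor) \]
has trivial leftmost and rightmost terms. This alone gives only that the distinguished fiber of the middle map is a singleton, but the standard twisting argument promotes it to $H^1(K_v, G) = 1$: for any $\xi \in H^1(K_v, G)$, the twist ${}^\xi G^\ssc$ is again a simply connected semisimple $K_v$-group, hence has trivial $H^1$ by Theorem~\ref{t:scss}(i), making the fiber through $\xi$ a singleton as well.

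For surjectivity in (ii), I would consider the commutative square whose top row is the surjection $H^1(K, G^\ssc) \onto H^1(K, G)$ and bottom row is $\prod_{v \in \V_\infty(K)} H^1(K_v, G^\ssc) \onto \prod_{v \in \V_\infty(K)} H^1(K_v, G)$, both surjections coming from vanishing of $H^1$ of $G^\tor$ globally and locally; the vertical arrows are the localization maps. The left vertical arrow is bijective by the Hasse principle for $G^\ssc$ (Theorem~\ref{t:scss}(ii)), so the right vertical arrow $\loc_\infty$ is surjective.

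For injectivity, observe first that a twist of a quasi-trivial group by a cocycle with values in itself is again quasi-trivial (the derived subgroup stays simply connected semisimple, and the abelian quotient $G^\tor$ is unchanged because the conjugation action of $G$ on $G^\tor$ is trivial). By twisting we reduce to showing that the only class in $H^1(K, G)$ trivial at every $v \in \V_\infty(K)$ is the trivial one. Given such $\xi$, lift it to $\tilde\xi \in H^1(K, G^\ssc)$. At each $v \in \V_\infty(K)$, local exactness provides $g_v \in G^\tor(K_v)$ with $\delta_v(g_v) = \tilde\xi|_{K_v}$, where $\delta_v$ is the connecting map. Since $H^1(K_v, G^\ssc)$ is finite at archimedean places, $\delta_v$ is locally constant, so weak approximation for $G^\tor$ produces $g \in G^\tor(K)$ with $\delta_v(g) = \tilde\xi|_{K_v}$ at every $v \in \V_\infty(K)$. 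By the Hasse principle for $G^\ssc$ this forces $\delta(g) = \tilde\xi$ in $H^1(K, G^\ssc)$; since $\delta(g)$ maps to the trivial class in $H^1(K, G)$ by exactness, we conclude $\xi = 1$. The main delicate point throughout is the careful handling of nonabelian cohomology, in particular justifying the twisting steps and confirming that the quasi-triviality hypothesis is preserved after twisting.
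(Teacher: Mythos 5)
Your argument is correct, and for part (i) it coincides with the paper's: both run the local cohomology sequence of $1\to G^\ssc\to G\to G^\tor\to 1$ and kill the two outer terms. One small remark there: the twisting step you add is not actually needed, because exactness of pointed sets at $H^1(K_v,G)$ says the image of $H^1(K_v,G^\ssc)$ equals the \emph{preimage of the distinguished point} of $H^1(K_v,G^\tor)$; since $H^1(K_v,G^\tor)$ is itself a one-point set, that preimage is all of $H^1(K_v,G)$, so triviality follows directly. (Your twisting argument is also valid, just redundant.) The real divergence is in part (ii): the paper does not prove it — it cites Colliot-Th\'el\`ene \cite[Proposition 9.2(iii)]{CT-RF} for number fields and observes that over a function field the statement degenerates, since $\V_\infty(K)=\emptyset$ makes the target a singleton and $H^1(K,G^\ssc)=1$ (Theorem \ref{t:scss}(ii) with empty product) forces $H^1(K,G)=1$ as well. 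You instead reconstruct the full argument uniformly in both cases: surjectivity of $\loc_\infty$ by comparing with the bijective localization for $G^\ssc$, and injectivity by twisting (correctly checking that a twist of a quasi-trivial group is quasi-trivial, the key point being that the inner action on $G^\tor$ is trivial), lifting a locally trivial class to $H^1(K,G^\ssc)$, and adjusting the lift by the connecting map $\delta$ evaluated at a rational point of $G^\tor$ obtained from weak approximation, using that $\delta_v$ is locally constant at archimedean places. This is essentially Colliot-Th\'el\`ene's proof written out; what it buys you is a self-contained, characteristic-independent treatment at the cost of invoking weak approximation for $G^\tor$ and finiteness of archimedean $H^1$, neither of which the paper needs for its two-line disposal of the function field case.
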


\begin{proof}
Concerning (\rm i), from \eqref{e:qt} we obtain a cohomology exact sequence
\[H^1(K_v,G^\ssc)\to H^1(K_v,G)\to H^1(K_v,G^\tor)\]
where $H^1(K_v,G^\tor)$ is trivial because $G^\tor$ is a quasi-trivial torus,
and $H^1(K_v,G^\ssc)$ is trivial by Theorem \ref{t:scss}(i).
Thus $H^1(K_v,G)$ is trivial.

Concerning (\rm ii),
for the case of a number field see  \cite[Proposition 9.2(iii)]{CT-RF}.
In the case of a function field,
 from \eqref{e:qt} we obtain a cohomology exact sequence
\[H^1(K,G^\ssc)\to H^1(K,G)\to H^1(K,G^\tor)\]
where $H^1(K,G^\tor)$ is trivial because $G^\tor$ is a quasi-trivial torus,
and $H^1(K,G^\ssc)$ is trivial in the function field case by Theorem \ref{t:scss}(ii).
Thus $H^1(K,G)$ is trivial, as desired.
\end{proof}

\begin{proposition}[{Colliot-Th\'el\`ene \cite[Proposition 9.2(ii)]{CT-RF} in the number field case}]
\label{p:WA-qt}
Let $G$ be a quasi-trivial group over a global field $K$.
Then $G$ has the weak approximation property, that is, for any finite subset $S\subset \V(K)$,
the group $G(K)$ is dense in $G(K_S)$.
\end{proposition}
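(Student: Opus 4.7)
The plan is to reduce weak approximation for $G$ to weak approximation for the two building blocks in the exact sequence \eqref{e:qt}, namely the quasi-trivial torus $G^\tor$ and the simply connected semisimple group $G^\ssc$. Fix $(g_v)_{v\in S}\in G(K_S)$ to be approximated. First I would enlarge $S$ to $S'\coloneqq S\cup \V_\infty(K)$ (in the function field case $\V_\infty(K)=\emptyset$, so $S'=S$), extending the given data by setting $g_v=1$ for $v\in S'\smallsetminus S$. Writing $\bar g_v\in G^\tor(K_v)$ for the image of $g_v$, weak approximation for the quasi-trivial torus $G^\tor$ yields $t\in G^\tor(K)$ arbitrarily close to $(\bar g_v)_{v\in S'}$ in $G^\tor(K_{S'})$.

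The central step is to show that $t$ admits a $K$-rational lift to $G$. The obstruction lies in $H^1(K,G^\ssc)$ via the connecting homomorphism $\partial$, and by the Kneser--Harder--Chernousov Hasse principle (Theorem \ref{t:scss}(ii)) it suffices to check that $\partial_v(t)=0$ at every archimedean place. For each $v\in\V_\infty(K)\subseteq S'$, the coset $\bar g_v$ lies in the image of $G(K_v)\to G^\tor(K_v)$, which equals $\ker\partial_v$ and is open in $G^\tor(K_v)$ (the morphism $G\to G^\tor$ being smooth and surjective). Choosing $t$ close enough to $\bar g_v$ at every archimedean place of $K$, we obtain $\partial_v(t)=0$ for all $v\in\V_\infty(K)$, hence $\partial(t)=0$, and $t$ lifts to some $g\in G(K)$.

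For each $v\in S$ consider $g_v g^{-1}\in G(K_v)$; its image $\bar g_v\, t^{-1}\in G^\tor(K_v)$ is close to $1$. Using a local analytic section of the smooth surjection $G\to G^\tor$ defined near the identity, I can write $g_v g^{-1}=e_v\cdot h_v$ with $e_v\in G(K_v)$ close to $1$ and $h_v\in G^\ssc(K_v)$ close to $g_v g^{-1}$. Weak approximation for the simply connected semisimple group $G^\ssc$ (Platonov in characteristic zero, G.~Prasad in positive characteristic) then produces $h\in G^\ssc(K)$ close to $(h_v)_{v\in S}$ in $G^\ssc(K_S)$. The product $hg\in G(K)$ is then close to $h_v g\approx g_v$ simultaneously at every $v\in S$, as required.

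The main obstacle is the cohomological step in the second paragraph: arranging that the torus lift $t\in G^\tor(K)$ extends to $G(K)$. The essential trick is the enlargement $S'\supseteq \V_\infty(K)$, which provides enough local control at the archimedean places---where the Hasse obstruction for $G^\ssc$ is concentrated---to force $\partial(t)$ to vanish locally everywhere. Once this is achieved, the remaining ingredients (weak approximation for $G^\tor$ and for $G^\ssc$, together with the openness of $G(K_v)\to G^\tor(K_v)$) assemble into the conclusion by a routine diagram chase.
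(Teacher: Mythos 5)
Your argument is correct and is essentially the proof of Colliot-Th\'el\`ene \cite[Proposition 9.2(ii)]{CT-RF} that the paper simply cites (and asserts extends to the function field case): project to the quasi-trivial torus $G^\tor$ and approximate there, kill the lifting obstruction in $H^1(K,G^\ssc)$ using the Hasse principle together with the openness of the image of $G(K_v)\to G^\tor(K_v)$ at archimedean places (the obstruction being vacuous over function fields), and finish with weak approximation for the simply connected group $G^\ssc$. You have merely written out in full the details the paper leaves to the reference.
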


\begin{proof}
The proof of \cite[Proposition 9.2(ii)]{CT-RF} for the group $A(G)$  in the number field case
immediately extends to $A_S(G)$ and to the function field case.
\end{proof}

\begin{definition}
Let $T$ be a torus over a field $K$, and let $L/K$ be a finite Galois extension.
We say that $T$ is {\em $L/K$-free} if $T$ splits over $L$ and the $\Z[\Gal(L/K)]$-module $\X_*(T)$ is free.
Alternatively, $T$ is $L/K$-free if
it is isomorphic to $(R_{L/K}\GmL)^{n_T}$ for some integer $n_T\ge 0$,
where $\GmL$ is the multiplicative group over $L$ and $R_{L/K}$ denotes the Weil restriction of scalars.
\end{definition}

Observe that any $L/K$-free torus is quasi-trivial.

\begin{definition}\label{d:L/K-free-group}
Let $G$ be a reductive $K$-group with derived group $G^\sss=[G,G]$.
Write $G^\tor=G/G^\sss$.
We say that $G$ is {\em $L/K$-free} if $G^\sss$ is simply connected and the $K$-torus $G^\tor$ is $L/K$-free.
\end{definition}

Observe that any $L/K$-free reductive $K$-group is quasi-trivial.

\begin{definition}\label{d:L/K-free}
Let $G$ be a reductive $K$-group. An $L/K$-free resolution of $G$ is a short exact sequence of $K$-groups
\begin{equation}\label{e:L/K-free}
 1\to B\to G'\to G\to 1
\end{equation}
where $G'$ is an $L/K$-free reductive $K$-group and $B$ is a $K$-torus.
\end{definition}

\begin{proposition}\label{p:E-L/K-free}
Any reductive $K$-group  $G$
that splits over a finite Galois extension $L/K$,
admits an $L/K$-free resolution.
\end{proposition}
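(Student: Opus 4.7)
The plan is to construct $G'$ by combining two classical reductions: a $z$-extension of $G$ forcing the derived group to be simply connected, followed by an $L/K$-free cover of the resulting torus quotient.

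First I will carry out the $z$-extension. The finite central $K$-subgroup $F\coloneqq\ker(\rho\colon G^\ssc\to G)$ is of multiplicative type and split by $L$, since $G^\ssc$ inherits a splitting from $G$. Its character group $X^*(F)$ is a finite $\Z[\G]$-module (where $\G=\Gal(L/K)$), hence a quotient of $\Z[\G]^n$ for some $n$; Cartier duality then gives a closed immersion $F\into P$ into the $L/K$-free $K$-torus $P\coloneqq R_{L/K}(\GmL)^n$. A classical $z$-extension construction (in the spirit of \cite{CT-RF}) then produces a central extension of $K$-groups
\[ 1\to P\to G_1\to G\to 1 \]
in which $G_1^\sss$ is simply connected; in fact $G_1^\sss=G^\ssc$.

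Next I will replace the torus quotient of $G_1$ by an $L/K$-free one. Since $G_1^\tor$ splits over $L$, its cocharacter module $X_*(G_1^\tor)$ is a finitely generated $\Z[\G]$-module, and hence admits a surjection $\Z[\G]^m\onto X_*(G_1^\tor)$. Because the kernel of this surjection is free as a $\Z$-module, it corresponds to a short exact sequence of $K$-tori
\[ 1\to B'\to Q\to G_1^\tor\to 1 \]
with $Q$ an $L/K$-free torus. I will then set $G'\coloneqq G_1\times_{G_1^\tor}Q$.

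Finally I will verify that $G'$ has the desired properties. The first projection $G'\to G_1$ is surjective with central torus kernel $B'$, so $G'$ is a reductive $K$-group whose derived group is a central isogeny cover of $G_1^\sss=G^\ssc$, and must therefore equal $G^\ssc$ itself, which is already simply connected. The second projection identifies $Q$ with $(G')^\tor$, so $(G')^\tor$ is $L/K$-free and $G'$ is an $L/K$-free reductive group. The composite $G'\to G_1\to G$ is surjective with kernel an extension of the torus $P$ by the torus $B'$, which is itself a torus $B$; this yields the desired $L/K$-free resolution $1\to B\to G'\to G\to 1$. I expect the main technical obstacle to be the first paragraph: the $z$-extension must be produced with the central torus chosen to be $L/K$-free rather than merely quasi-trivial, which requires care in the bookkeeping but is classical.
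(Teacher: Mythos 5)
Your construction is correct, but it follows a different route from the paper's. The paper works in one step: it covers the radical $Z$ of $G$ by an $L/K$-free torus $Q$, forms the surjection $G^\ssc\times Q\to G$ with kernel a (possibly non-smooth) group of multiplicative type $Z'$, and then pushes out along an embedding $Z'\into B$ into a torus with $L/K$-free cokernel, the existence of which is the content of its key Lemma on resolutions of finitely generated $\G$-modules by a torsion-free module and a free module. You instead decompose the problem into two classical steps: a $z$-extension $1\to P\to G_1\to G\to 1$ with $P$ an $L/K$-free torus containing the fundamental group $F=\ker(G^\ssc\to G^\sss)$, followed by a pullback of $G_1$ along an $L/K$-free cover $Q\onto G_1^\tor$. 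Both steps check out: the lifting needed in the $z$-extension (extending $F\into P$ to a map on the full finite kernel of $G^\ssc\times Z\to G$) succeeds precisely because $\X^*(P)$ is a \emph{free} $\Z[\G]$-module, which is the point you rightly flag; and the fiber product $G_1\times_{G_1^\tor}Q$ has derived group mapping isomorphically onto $G_1^\sss=G^\ssc$ and torus quotient $Q$, while the kernel of $G'\to G$ is an extension of the torus $P$ by the torus $B'$ and hence a torus (the same fact from \cite[Section 0.7]{CT-RF} that the paper invokes). What your route buys is that one never has to manipulate the non-smooth multiplicative-type kernel $Z'$ or invoke the two-term resolution lemma, at the cost of outsourcing the existence of $z$-extensions with prescribed ($L/K$-free) central kernel to the literature; the paper's route is self-contained modulo that lemma and produces $G'$ in a single quotient. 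Either argument yields a valid $L/K$-free resolution in the sense of Definition \ref{d:L/K-free}.
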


\begin{proof}
We   follow  the proof of \cite[Proposition-Definition 3.1]{CT-RF}.
Since $G$ splits over $L$, it has a maximal torus $T$ that splits over $L$.
Let $Z$ denote the identity component of the center of $G$ (that is, the radical of $G$);
this is a $K$-torus splitting over $L$, because it is a subtorus of the torus $T$ splitting over $L$.
There exists  a surjective homomorphism of $K$-tori  $Q\to Z$  with $Q$  being $L/K$-free.
Let $T^\ssc\subset G^\ssc$ denote the preimage of $T$ in $G^\ssc$.
Then $T^\ssc$ is isogenous to the $K$-torus $T^\sss\coloneqq T\cap G^\sss$.
We see that both $T^\sss$ and $T^\ssc$  split over $L$.
We have a natural surjective $K$-homomorphism $G^\ssc\times Q\to G$.
Let $Z'$ denote the kernel of this homomorphism;
then  $Z'$ is the kernel of the surjective homomorphism $T^\ssc\times Q\to T$,
and therefore, $Z'$ is a group of multiplicative type over $K$ (not necessary smooth).
It follows that we have a short exact sequence of character groups
\[0\to\X^*(T)\to\X^*(T^\ssc)\oplus \X^*(Q)\to \X^*(Z')\to 0;\]
see Milne \cite[Theorem 12.9]{Milne-AG}.
We see that  the absolute Galois group  $\Gal(K^s/K)$
acts on  the character group $\X^*(Z')$ via $\Gal(L/K)$.

By Lemma \ref{l:G-free} below, there exists a short exact sequence
\[ 1\to Z'\to B\to F\to 1,\]
where  $B$ is a $K$-torus that splits over $L$,
and $F$ is an $L/K$-free $K$-torus.
The diagonal map defines an embedding of $Z'$ into the product
$(G^\ssc\times Q)\times B$ with central image.
Let $G'$ be the quotient of the reductive group $(G^\ssc\times Q)\times B$ by $Z'$.
Then $G'$ is a reductive $K$-group,
which fits into the following commutative diagram with exact rows and columns
(where we replace the given arrow $Z'\to B$ by its inverse):
\[
\xymatrix{
     &1\ar[d]   &1\ar[d] \\
1\ar[r]  &Z' \ar[r]\ar[d] &G^\ssc\times Q\ar[r]\ar[d]   &G\ar[r]\ar@{=}[d]  &1 \\
1 \ar[r]  &B\ar[r]\ar[d]  & G'\ar[r]\ar[d]                     &G \ar[r]                 &1 \\
     &F\ar@{=}[r]\ar[d]  &F\ar[d] \\
     &1   &1
}
\]
The quotient  of the group $G'$ by the  normal subgroup $G^\ssc\times 1\subset  G^\ssc\times Q\subset G'$
is a $K$-group extension of the $K$-torus $F$  by the $K$-torus $Q$.
By \cite[Section 0.7]{CT-RF} such a $K$-group is a $K$-torus.
Since any extension of an $L/K$-free torus by an $L/K$-free torus is a  split extension
(as one can see on the level of cocharacter groups),
we conclude that the quotient  of $G'$ by $G^\ssc$ is an $L/K$-free torus.
The derived group $G^{\prime\,\sss}$  can be identified with $G^\ssc$,
and the group $G^{\prime\,\tor}$ is an $L/K$-free torus.
Thus the reductive group $G'$ is $L/K$-free,
and the middle row of the diagram is a desired $L/K$-free resolution of $G$.
\end{proof}

See Appendix \ref{app:CT} for an alternative proof of Proposition \ref{p:E-L/K-free}.

\begin{lemma}
\label{l:G-free}
Let $\G$ be a finite group and $M$ be a finitely generated $\G$-module.
Then there exists a resolution
\begin{equation*}
0\to M^{-1}\to M^0\to M\to 0
\end{equation*}
where $M^0$ is a finitely generated $\Z$-torsion-free $\G$-module
and $M^{-1}$ is a free $\G$-module.
\end{lemma}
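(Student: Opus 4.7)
The plan is to construct the desired pair $(M^0, M^{-1})$ as a pushout of two short exact sequences in which the same $\G$-lattice appears as a submodule of two different free $\G$-modules. First I would pick any $\G$-equivariant surjection $\pi_0 \colon F_0 \onto M$ with $F_0$ a finitely generated free $\Z[\G]$-module, and set $R := \ker \pi_0$. Being a subgroup of the $\Z$-free module $F_0$, $R$ is automatically a finitely generated $\Z$-torsion-free $\G$-module, i.e., a $\G$-lattice.

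The technical heart of the argument is to embed $R$ into a second finitely generated free $\G$-module $F_{-1}$ in such a way that the cokernel $F_{-1}/R$ is already $\Z$-torsion-free. For this I would exploit the self-duality $\Hom_\Z(\Z[\G], \Z) \cong \Z[\G]$ of $\Z[\G]$ as a $\G$-module, coming from the basis dual to $\{g\}_{g \in \G}$. Setting $R^\vee := \Hom_\Z(R, \Z)$ with the contragredient action, $R^\vee$ is again a finitely generated $\G$-lattice and hence admits a $\G$-equivariant surjection $F_{-1} \onto R^\vee$ from a finitely generated free $\Z[\G]$-module $F_{-1}$, with some kernel $K$. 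Applying $\Hom_\Z(-, \Z)$ to this short exact sequence and using that $\Ext^1_\Z(R^\vee, \Z) = 0$ (since $R^\vee$ is $\Z$-free), together with the canonical identifications $R^{\vee\vee} \cong R$ and $F_{-1}^\vee \cong F_{-1}$, yields a short exact sequence
\[ 0 \to R \to F_{-1} \to K^\vee \to 0 \]
in which the cokernel $K^\vee$ is again a $\G$-lattice, and in particular $\Z$-torsion-free.

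With the two embeddings $R \into F_0$ and $R \into F_{-1}$ in hand, I would form the pushout $M^0 := (F_0 \oplus F_{-1})/R_\Delta$, where $R_\Delta := \{(r, -r) : r \in R\}$ denotes the antidiagonal copy of $R$. A routine diagram chase produces two short exact sequences of $\G$-modules,
\[ 0 \to F_{-1} \to M^0 \to M \to 0 \quad \text{and} \quad 0 \to F_0 \to M^0 \to K^\vee \to 0. \]
The second one exhibits $M^0$ as an extension of the $\Z$-torsion-free module $K^\vee$ by the $\Z$-free module $F_0$, so $M^0$ is finitely generated and $\Z$-torsion-free. The first sequence is then the desired resolution, with $M^{-1} := F_{-1}$ a free $\G$-module.

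The main obstacle is securing the $\Z$-torsion-freeness of $F_{-1}/R$ in the middle step. A more naive embedding $R \into F_{-1}$---for instance one obtained by embedding $R \otimes_\Z \Q$ into some $\Q[\G]^m$ and clearing denominators---would typically leave torsion in $F_{-1}/R$, and that torsion would propagate into $M^0$ through the second of the two sequences above, breaking the construction. The $\Z$-duality trick circumvents this by realising $F_{-1}/R$ as the $\Z$-dual of a $\G$-lattice, which is automatically torsion-free.
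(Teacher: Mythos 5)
Your argument is correct. Note that the paper does not actually supply a proof of this lemma: it only cites \cite[Lemma 4.1.1]{BK} and Milne--Shih \cite[Proof of Lemma 3.2]{MSh}, so you have written out an argument where the paper defers to the literature. Your construction is the standard one in spirit --- take a free presentation $0\to R\to F_0\to M\to 0$, embed the relation lattice $R$ into a second finitely generated free module $F_{-1}$ with $\Z$-torsion-free cokernel, and push out --- and each step checks out: the $\Z$-dual of the surjection $F_{-1}\onto R^\vee$ is injective with cokernel $K^\vee$ a lattice because $\Ext^1_\Z(R^\vee,\Z)=0$ and $R^{\vee\vee}\cong R$, $F_{-1}^\vee\cong F_{-1}$ hold for lattices and for $\Z[\G]$ respectively; and the pushout $M^0=(F_0\oplus F_{-1})/R_\Delta$ does sit in the two exact sequences you state, the second of which shows $M^0$ is torsion-free as an extension of $K^\vee$ by $F_0$. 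The only point where you diverge from the usual treatment is in how the embedding $R\into F_{-1}$ with lattice cokernel is produced: the references typically use the induced-module map $r\mapsto\sum_{g\in\G}g\otimes g^{-1}r$ into $\Z[\G]\otimes_\Z R_0$ (where $R_0$ is $R$ with trivial action), which is split injective over $\Z$ and so has free cokernel, avoiding the double dualization. Your duality trick buys nothing extra here but is equally valid; you correctly identify that the torsion-freeness of $F_{-1}/R$ is the crux, and your mechanism for securing it works.
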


\begin{proof}
See \cite[Lemma 4.1.1]{BK}, or Milne and Shih \cite[Proof of Lemma 3.2]{MSh}.
\end{proof}

\begin{theorem}\label{t:FR}
Let $G$ be a reductive group over a global field $K$.
Consider a short exact sequence
\begin{equation}\label{e:QTR}
 1\to B\to G'\to G\to 1
\end{equation}
where $G'$ is a quasi-trivial $K$-group and $B\subset G'$ is a smooth central $K$-subgroup.
Let $S$ be a finite set of places of $K$.
Then the closure $\ov{G(K)}_S$ of $G(K)$ in $G(K_S)$ is a normal subgroup of finite index,
and the connecting homomorphism $G(K_S)\to H^1(K_S,B)$ induces an isomorphism
\begin{equation}\label{e:AS-ChB}
A_S(G)\isoto\Ch^1_S(B).
\end{equation}
\end{theorem}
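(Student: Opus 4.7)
The plan is to apply the snake lemma to the commutative diagram of exact cohomology sequences associated to \eqref{e:QTR}, exploiting weak approximation and the Hasse principle for quasi-trivial groups (Propositions \ref{p:WA-qt} and \ref{p:HP-qt}).

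First I would establish the topological structure. Since $B$ is smooth, the morphism $\rho\colon G'\to G$ is smooth and surjective, so for each place $v$ the induced map $\rho_v\colon G'(K_v)\to G(K_v)$ is open and its image $U_v:=\rho_v(G'(K_v))$ is an open subgroup of $G(K_v)$ of finite index bounded by $\#H^1(K_v,B)$. Set $U_S:=\prod_{v\in S}U_v$. Since $B$ is central in $G'$, each connecting map $\delta_v\colon G(K_v)\to H^1(K_v,B)$ is a group homomorphism with kernel $U_v$, making $U_S$ a normal open subgroup of $G(K_S)$ of finite index. By Proposition \ref{p:WA-qt}, $G'(K)$ is dense in $G'(K_S)$, whence $\rho(G'(K))$ is dense in $U_S$; as $U_S$ is closed we get $U_S\subseteq\overline{G(K)}_S$. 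Then $G(K)\cdot U_S$ is an open, hence closed, subgroup, so $\overline{G(K)}_S=G(K)\cdot U_S$; its normality and the finiteness of $A_S(G)$ follow from the embedding $G(K_S)/U_S\hookrightarrow\prod_{v\in S}H^1(K_v,B)$ into a finite abelian group.

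Next, the composition $\phi\colon G(K_S)\xrightarrow{\delta_S}\prod_{v\in S}H^1(K_v,B)\twoheadrightarrow\Ch^1_S(B)$ is a continuous group homomorphism that vanishes on both $U_S=\ker\delta_S$ and $G(K)$ (since its restriction to $G(K)$ factors through $\loc_S\circ\delta\colon G(K)\to H^1(K,B)$), hence descends to $\bar\phi\colon A_S(G)\to\Ch^1_S(B)$. To prove $\bar\phi$ bijective I would apply the snake lemma to the commutative diagram of short exact sequences with vertical localization maps
\[
\begin{array}{ccccccccc}
0 & \to & G(K)/\rho(G'(K)) & \to & H^1(K,B) & \to & \ker[H^1(K,G')\to H^1(K,G)] & \to & 0 \\
  &     & \downarrow       &     & \downarrow &   & \downarrow                 &     &   \\
0 & \to & G(K_S)/U_S       & \to & \prod_{v\in S}H^1(K_v,B) & \to & \ker\bigl[\prod_{v\in S}H^1(K_v,G')\to\prod_{v\in S}H^1(K_v,G)\bigr] & \to & 0
\end{array}
\]
whose rows are exact by the cohomology sequence of \eqref{e:QTR} (using that $B$ is central). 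The snake lemma yields an exact sequence
\[
\cdots\to K\to A_S(G)\xrightarrow{\bar\phi}\Ch^1_S(B)\to C\to 0,
\]
where $K$ and $C$ are the kernel and cokernel of the localization map on the rightmost column. The vanishing of $C$ (surjectivity of $\bar\phi$) and the triviality of the image of $K$ in $A_S(G)$ (injectivity) are obtained by a further diagram chase: Proposition \ref{p:HP-qt}(i) makes the non-archimedean entries in the rightmost column trivial, and Proposition \ref{p:HP-qt}(ii) lets any compatible family of archimedean local classes in $H^1(K_v,G')$ be realized by a global class in $H^1(K,G')$.

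The main obstacle is this last lifting step: given a tuple $(\alpha_v)_{v\in S\cap\V_\infty}$ with each $\alpha_v\in\ker[H^1(K_v,G')\to H^1(K_v,G)]$, one must exhibit a global element of $\ker[H^1(K,G')\to H^1(K,G)]$ restricting to $(\alpha_v)$ at $S$. This is precisely where the quasi-triviality of $G'$ is used in an essential way: the bijection of Proposition \ref{p:HP-qt}(ii) produces a global $\eta\in H^1(K,G')$ with prescribed archimedean localizations, and one must then combine the freedom to adjust the values of $\eta$ at places in $\V_\infty\setminus S$ with the exactness of the cohomology sequence of \eqref{e:QTR} to ensure that the image of $\eta$ in $H^1(K,G)$ vanishes.
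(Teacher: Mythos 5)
The paper's own ``proof'' of Theorem \ref{t:FR} is a deferral to Sansuc's argument (itself going back to Kneser), so your attempt to supply an actual argument is judged on its own terms. Your topological half is correct and is the standard one: $U_v=\rho_v(G'(K_v))=\ker\delta_v$ is open of finite index, $U_S\subseteq\ov{G(K)}_S$ by Proposition \ref{p:WA-qt}, hence $\ov{G(K)}_S=G(K)\cdot U_S$ is open and of finite index, normal because $G(K_S)/U_S$ embeds into the abelian group $\prod_{v\in S}H^1(K_v,B)$, and $\phi$ descends to $\bar\phi\colon A_S(G)\to\Ch^1_S(B)$.

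The second half has two genuine gaps. First, the snake lemma does not apply to your diagram: $\ker[H^1(K,G')\to H^1(K,G)]$ is only a pointed set, and the top row is not an exact sequence of groups --- the fibre of $H^1(K,B)\onto\ker[H^1(K,G')\to H^1(K,G)]$ over the base point is $\delta(G(K))$, but the fibres over other classes are cosets of the images of \emph{twisted} connecting maps, which in general differ. (What is true, and would need to be stated and proved, is that since $B$ is central two classes of $H^1(K,B)$ have the same image in $H^1(K,G')$ if and only if they differ by an element of $\im\delta$; with this one can replace the snake lemma by a direct chase.) Second, and decisively, the step you yourself label ``the main obstacle'' is exactly where the theorem lives, and you do not carry it out. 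Both the surjectivity and the injectivity of $\bar\phi$ reduce to showing that the class $\eta\in H^1(K,G')$ supplied by Proposition \ref{p:HP-qt}(ii) --- trivial at all finite places and at the archimedean places outside $S$, with prescribed liftable classes at $S\cap\V_\infty$ --- lies in $\im\big[H^1(K,B)\to H^1(K,G')\big]=\ker\big[H^1(K,G')\to H^1(K,G)\big]$. What follows formally is only that its image $\bar\eta\in H^1(K,G)$ is everywhere locally trivial, i.e.\ $\bar\eta\in\Sha^1(G)$; since $\Sha^1(G)$ can be nontrivial for a general reductive $G$, one cannot conclude $\bar\eta=1$ without further input. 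Supplying that input (via twisting and a comparison of $\Sha^1(G)$ with the Tate--Shafarevich groups attached to $B$, or via abelianized cohomology) is the actual content of Kneser's and Sansuc's proofs. As written, your proposal establishes the first assertion of the theorem and the existence of $\bar\phi$, but not its bijectivity.
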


\begin{proof}
Sansuc \cite[Theorem 3.3]{Sansuc} considers a resolution \eqref{e:QTR}
in the case when $K$ is a number field and $B$ is a {\em finite abelian} $K$-group.
Moreover, he assumes that the resolution \eqref{e:QTR} is a special covering.
In this case, Sansuc shows that \eqref{e:QTR} induces an isomorphism \eqref{e:AS-ChB}.
Moreover, Colliot-Th\'el\`ene \cite[Theorem 9.4(i)]{CT-RF}
considers a  flasque resolution \eqref{e:QTR} of $G$, when $K$ is a number field.
Then $B$ is a flasque torus.
Colliot-Th\'el\`ene shows that a flasque resolution \eqref{e:QTR} induces an isomorphism \eqref{e:AS-ChB}.

Our Theorem \ref{t:FR} generalizes \cite[Theorem 3.3]{Sansuc} and \cite[Theorem 9.4(i)]{CT-RF}.
The proof of Sansuc \cite{Sansuc} (inspired by Kneser \cite{Kneser-Schwache})
immediately generalizes to our case.
\end{proof}

\section{Computing $\Ch^1_S(B)$ for a torus $B$}
\label{s:Ch}

Let $B$ be a torus over a global field $K$ with cocharacter group $\Y=\X_*(B)$.
Let $L/K$ be a finite Galois extension splitting $T$. We write $\G=\Gal(L/K)$.
In this section we compute $\Ch^1_S(B)$ for a finite set $S$ of places of $K$ in terms of the $\G$-module $Y$.
We need two lemmas.

\begin{lemma}[probably known]
\label{l:torsion-free}
Let $\D$ be a finite group and $A$ be a $\D$-module. If $A$ is torsion-free, then
\[ H^{-1}(\D,A)=A_{\D,\Tors}\]
where $A_\D$ denotes the group of coinvariants of $\D$ in $A$,
and $A_{\D,\Tors}\coloneqq(A_\D)_\Tors$, the torsion subgroup of $A_\D$.
\end{lemma}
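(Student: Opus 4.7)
The plan is to unwind the Tate-cohomology definition: $H^{-1}(\D, A) = {}_N A / I_\D A$, where $N \colon A \to A$ is the norm $a \mapsto \sum_{\gamma \in \D} \gamma a$ and $I_\D \subset \Z[\D]$ is the augmentation ideal. Since $N$ factors canonically as $A \twoheadrightarrow A_\D \xrightarrow{N_*} A^\D \hookrightarrow A$, this quotient is exactly $\ker(N_* \colon A_\D \to A^\D)$. So the task reduces to identifying $\ker N_*$ with $A_{\D,\Tors}$.

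One inclusion is immediate: since $A$ is torsion-free, so is its subgroup $A^\D$, hence any torsion element of $A_\D$ lies in $\ker N_*$. For the reverse inclusion I would tensor with $\Q$. Write $V = A \otimes_\Z \Q$; right exactness of $-\otimes \Q$ gives $V_\D = A_\D \otimes \Q$, and torsion-freeness of $A$ means the natural map $A_\D \to V_\D$ has kernel exactly $A_{\D,\Tors}$. Since $|\D|$ is invertible in $\Q$, the averaging map $\tfrac{1}{|\D|} N$ splits the inclusion $V^\D \hookrightarrow V$, giving a canonical identification $V_\D \xrightarrow{\sim} V^\D$ under which the rational norm $N_*^\Q \colon V_\D \to V^\D$ becomes multiplication by $|\D|$, hence is an isomorphism. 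Therefore any $x \in \ker N_*$ dies in $V_\D$, which forces $x \in A_{\D,\Tors}$.

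There is no genuine obstacle: the whole argument rests on two standard facts, namely that Tate cohomology of a finite group vanishes on $\Q$-vector spaces (which makes $N_*^\Q$ invertible), and that torsion-freeness of $A$ passes to the subgroup $A^\D$ (which gives the easy inclusion).
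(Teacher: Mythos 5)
Your proof is correct and follows essentially the same route as the paper: both identify $H^{-1}(\D,A)$ with $\ker\big[{\rm Nm}\colon A_\D\to A^\D\big]$ and deduce the inclusion $A_{\D,\Tors}\subseteq\ker{\rm Nm}$ from the torsion-freeness of $A^\D$. The only cosmetic difference is in the reverse inclusion, where the paper quotes the standard fact that $H^{-1}(\D,A)$ is killed by $\#\D$, while you re-derive the same conclusion by tensoring with $\Q$ and noting that the norm becomes invertible there.
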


\begin{proof}
By definition,
\[H^{-1}(\D,A)=\ker\big[{\rm Nm}\colon A_\D\to A^\D\big]\]
where $A^\D$ denote the group of invariants of $\D$ in $A$;
see \cite[Section IV.6]{CF}.
The group $H^{-1}(\D, A)$ is killed by $\#\D$ (see \cite[Section IV.6, Corollary 1 of Proposition 8]{CF}\hs),
whence $H^{-1}(\D,A)\subseteq A_{\D,\Tors}$\hs.
Since $A$ is torsion-free, so is $A^\D$,
whence
\[H^{-1}(\D,A)=\ker{\rm Nm}\supseteq A_{\D,\Tors}\hs.\]
Thus $H^{-1}(\D,A)=A_{\D,\Tors}$\hs.
\end{proof}

\begin{lemma}\label{l:torus}
With the notation of the beginning of this section,
for any place $v$ of $K$ choose a place $w$ of $L$ over $v$,
and denote by $\Gbv\subseteq \G$ the corresponding decomposition group (the stabilizer of $\bv$ in $\G$).
Then we have  canonical isomorphisms
\[H^1(K_v,B)\isoto Y_\Gbvt\hs,\quad\  H^1(K,B)\isoto \big(Y[\V_L]_0\big)_\Gt\hs.\]
\end{lemma}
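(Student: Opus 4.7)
Both isomorphisms reduce to Tate--Nakayama duality combined with Lemma~\ref{l:torsion-free}. Since $Y$ is finitely generated and torsion-free, so is the subgroup $Y[\V_L]_0 = Y\otimes_\Z\Z[\V_L]_0$ of the free $\Z$-module $Y[\V_L]$, and Lemma~\ref{l:torsion-free} identifies $Y_\Gbvt = \hat H^{-1}(\Gbv, Y)$ and $(Y[\V_L]_0)_\Gt = \hat H^{-1}(\G, Y[\V_L]_0)$. It therefore suffices to construct canonical isomorphisms
\[
H^1(K_v, B) \cong \hat H^{-1}(\Gbv, Y)\qquad\text{and}\qquad H^1(K, B) \cong \hat H^{-1}(\G, Y[\V_L]_0).
\]
The local isomorphism is immediate: since $B$ splits over $L_w$, we have $B(L_w) = Y\otimes_\Z L_w^\times$ as $\Gbv$-modules, and inflation--restriction together with Hilbert~90 gives $H^1(K_v, B) = H^1(\Gbv, B(L_w))$. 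Cup product with the local fundamental class in $H^2(\Gbv, L_w^\times)$ then yields the desired $\hat H^{-1}(\Gbv, Y) \isoto H^1(\Gbv, Y\otimes L_w^\times) = H^1(K_v, B)$.

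For the global isomorphism, the idea is to compare two four-term exact sequences in Tate cohomology. Tensoring the idele-class sequence $0 \to L^\times \to I_L \to C_L \to 0$ by the flat $\G$-module $Y$ and using global Tate--Nakayama $\hat H^i(\G, Y\otimes C_L) \cong \hat H^{i-2}(\G, Y)$ together with local Tate--Nakayama plus Shapiro $\hat H^i(\G, Y\otimes I_L) = \bigoplus_v \hat H^{i-2}(\Gbv, Y)$ (using that the $\G$-orbit of $w\in\V_L$ over $v\in\V_K$ is $\G/\Gbv$), one obtains
\[
\bigoplus_v \hat H^{-2}(\Gbv, Y) \to \hat H^{-2}(\G, Y) \to H^1(K, B) \to \bigoplus_v \hat H^{-1}(\Gbv, Y) \to \hat H^{-1}(\G, Y).
\]
On the other hand, the short exact sequence $0 \to Y[\V_L]_0 \to Y[\V_L] \to Y \to 0$, combined with the Shapiro identification $\hat H^i(\G, Y[\V_L]) = \bigoplus_v \hat H^i(\Gbv, Y)$, yields the parallel sequence
\[
\bigoplus_v \hat H^{-2}(\Gbv, Y) \to \hat H^{-2}(\G, Y) \to \hat H^{-1}(\G, Y[\V_L]_0) \to \bigoplus_v \hat H^{-1}(\Gbv, Y) \to \hat H^{-1}(\G, Y).
\]
Verifying that the two sequences are canonically identified then gives $H^1(K, B) \cong \hat H^{-1}(\G, Y[\V_L]_0)$ by the five lemma.

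\textbf{Main obstacle.} The hard step is the comparison of the two exact sequences above: concretely, that the maps $\bigoplus_v\hat H^{-2}(\Gbv,Y)\to\hat H^{-2}(\G, Y)$ coincide. In the $Y[\V_L]_0$ sequence this map is (under Shapiro) a direct sum of corestrictions, while in the idele-class sequence it encodes the compatibility between the local fundamental classes $u_v\in H^2(\Gbv, L_w^\times)$ and the global fundamental class in $H^2(\G, C_L)$---i.e., the sum-of-local-invariants formula at the heart of global class field theory. This is precisely Tate's theorem on cohomology of tori over global fields, so I expect the paper to cite Tate's 1966 paper \emph{The cohomology groups of tori in finite Galois extensions of number fields} (or Milne's \emph{Arithmetic Duality Theorems}) rather than reprove the result from scratch.
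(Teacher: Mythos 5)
Your proposal is correct and follows essentially the same route as the paper: both reduce the two isomorphisms to Tate's theorem \cite[Theorem on page 717]{Tate} (local and global Tate--Nakayama duality for the modules $Y$ and $Y[\V_L]_0$) combined with Lemma~\ref{l:torsion-free}, and the paper indeed simply cites Tate for the comparison you flag as the main obstacle rather than re-deriving the compatibility of local and global fundamental classes. The only cosmetic difference is that the paper quotes the global isomorphism $H^{-1}(\G, Y[\V_L]_0)\isoto H^1(L/K,B)$ directly from Tate, whereas you sketch how it would be assembled from the two four-term exact sequences.
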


\begin{proof}
Since the torus $B$ splits over $L_w$\hs,
we have $H^1(K_v,B)=H^1(L_\bv/K_v,B)$; see Sansuc \cite[(1.9.2)]{Sansuc}.
We have the Tate-Nakayama isomorphism
\[H^{-1}(\Gbv,Y)\isoto H^1(\Gbv, Y\otimes_\Z L_\bv^\times)=H^1(L_w/K_v,B);\]
see  Tate \cite[Theorem on page 717]{Tate}.
By Lemma \ref{l:torsion-free}, we have
$H^{-1}(\Gbv,Y)=Y_\Gbvt$\hs, whence
$H^1(L_w/K_v,B)\cong Y_\Gbvt$\hs.

Similarly, we have $H^1(K,B)=H^1(L/K,B)$; see \cite[(1.9.2)]{Sansuc}.
We have the Tate isomorphism
\[H^{-1}\big(\G,Y[\V_L]_0\big)\isoto H^1(L/K,B);\]
see \cite[Theorem on page 717]{Tate}.
Since $Y[\V_L]_0$ is torsion-free, we conclude by Lemma \ref{l:torsion-free} that
\[ H^1(L/K,B)\cong H^{-1}\big(\G,Y[\V_L]_0\big)=\big(Y[\V_L]_0\big)_\Gt\hs,\]
as desired.
\end{proof}

\begin{subsec}
With the notation of Lemma \ref{l:torus},  consider the groups
\begin{equation}\label{e:Ups}
\Ups_S= \bigoplus_{v\in S}\Y_\Gbvt\quad\ \text{and}\quad\ \Ups_{S^\complement}
      = \bigoplus_{v\in S^\complement}\!\Y_\Gbvt\hs.
\end{equation}
By Lemma \ref{l:torus} we have canonical isomorphisms
\begin{equation*}
\Ups_S= \bigoplus_{v\in S}H^1(K_v,B)\quad\ \text{and}\quad\ \Ups_{S^\complement}
      = \bigoplus_{v\in S^\complement}\!H^1(K_v,B).
\end{equation*}
For each $v\in\V_K$\hs,  consider the natural projection homomorphism
\[\pi_v\colon \Y_\Gbvt\to Y_\Gt\hs.\]
We have a natural homomorphism
\[\pi_S\colon\, \Ups_S\to \Y_\Gt\hs,  \ \,\big[ y_v\big]_{v\in S}\mapsto \Bigg[\,\sum_{v\in S}\pi_v(y_v)\,\Bigg].\]
Similarly, we have a natural homomorphism
\[  \pi_{S^\complement}\colon\  \Ups_{S^\complement}\to \Y_\Gt\hs.\]
\end{subsec}

\begin{theorem}
\label{p:Ch-complement}
There are canonical isomorphisms
\[ \Ch^1_S(B)\,\cong\Ups_S\hs/\hs \pi_S^{-1}(\im\pi_{S^\complement})
\cong \im\pi_S/(\im\pi_S\cap\im\pi_{S^\complement}).\]
\end{theorem}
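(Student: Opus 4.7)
The plan is to read off the theorem from the long exact Tate cohomology sequence associated to the short exact sequence of $\G$-modules
\[ 0 \to Y[\V_L]_0 \to Y[\V_L] \to Y \to 0, \]
in which the right-hand map sends $\sum m_w\cdot w$ to $\sum m_w$. The relevant segment is
\[ H^{-1}(\G, Y[\V_L]_0) \xrightarrow{\alpha} H^{-1}(\G, Y[\V_L]) \xrightarrow{\sigma} H^{-1}(\G, Y). \]
By Lemma \ref{l:torus} the left-hand term is $H^1(K,B)$. Writing $Y[\V_L] = \bigoplus_{v\in\V_K} \Ind_{\Gbv}^\G Y$ as $\G$-modules, Shapiro's lemma combined with Lemma \ref{l:torsion-free} identifies the middle term with $\bigoplus_{v\in\V_K} Y_{\Gbv,\Tors} = \Ups_S \oplus \Ups_{S^\complement}$, which matches $\bigoplus_v H^1(K_v, B)$ under Lemma \ref{l:torus}. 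Lemma \ref{l:torsion-free} identifies the right-hand term with $Y_{\G,\Tors}$.

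Next I would verify that under these identifications $\alpha$ becomes the total localization map $H^1(K,B)\to\bigoplus_v H^1(K_v,B)$ and $\sigma$ becomes the summation $(y_v)_v \mapsto \sum_v \pi_v(y_v)$. This is the main technical point of the argument, but it is classical: it amounts to the naturality of the Tate--Nakayama isomorphism applied to the above short exact sequence, and is the content of Tate's global duality for tori, so I would cite Tate \cite{Tate} for it.

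Given this, exactness at the middle term yields $\im\alpha = \ker\sigma$. Let $\alpha_S \colon H^1(K,B) \to \Ups_S$ be the composition of $\alpha$ with projection to the $S$-summand; by definition $\Ch^1_S(B) = \coker \alpha_S$. An element $y_S \in \Ups_S$ lies in $\im\alpha_S$ iff there exists $y_{S^\complement}\in\Ups_{S^\complement}$ with $\pi_S(y_S) + \pi_{S^\complement}(y_{S^\complement}) = 0$, equivalently $\pi_S(y_S) \in \im\pi_{S^\complement}$ (using that $\im\pi_{S^\complement}$ is a subgroup, so closed under negation). Hence $\im\alpha_S = \pi_S^{-1}(\im\pi_{S^\complement})$, yielding the first isomorphism $\Ch^1_S(B) \cong \Ups_S / \pi_S^{-1}(\im\pi_{S^\complement})$.

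For the second isomorphism, observe that $\ker\pi_S \subseteq \pi_S^{-1}(\im\pi_{S^\complement})$ since $0 \in \im\pi_{S^\complement}$. The surjection $\pi_S \colon \Ups_S \twoheadrightarrow \im\pi_S$ therefore descends to an isomorphism
\[ \Ups_S / \pi_S^{-1}(\im\pi_{S^\complement}) \;\isoto\; \im\pi_S / (\im\pi_S \cap \im\pi_{S^\complement}), \]
by a standard application of the second isomorphism theorem. The main obstacle is thus the compatibility check in the second paragraph; the rest is bookkeeping with exact sequences.
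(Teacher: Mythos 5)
Your proposal is correct and follows essentially the same route as the paper: the paper likewise applies $H^{-1}$ of Tate cohomology to the sequence $0\to \Y[\V_L]_0\to \Y[\V_L]\to Y\to 0$, identifies the terms via Lemma \ref{l:torsion-free} and Shapiro's lemma, invokes Tate \cite{Tate} for the compatibility with the localization maps on $H^1(K,B)$ and $H^1(K_v,B)$, and deduces $\im\alpha_S=\pi_S^{-1}(\im\pi_{S^\complement})$ from exactness at the middle term exactly as you do. The only cosmetic difference is that the paper packages the Tate compatibility into the commutative diagram \eqref{e:diagram} rather than stating it as a separate naturality claim.
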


\begin{proof}
Consider the following commutative  diagram:
\begin{equation}\label{e:diagram}
\begin{aligned}
\xymatrix@C=14mm{
H^1(K,B)\ar[d]\ar[r]^-\sim &  H^{-1} \big(\Y[\V_L]_0\big)\ar[r]^-\sim\ar[d]                            &   \big(\Y[\V_L]_0\big)_\Gt\hskip-7mm\ar[d] \\
H^1(K_S,B)\ar[r]^-\sim     &\bigoplus\limits_{v\in S}  H^{-1}(\G\hm_{w}, Y) \ar[r]^-\sim      & \bigoplus\limits_{v\in S}\Y_\Gbvt\hskip-4mm
}
\end{aligned}
\end{equation}
in which the rectangle at left
comes from Tate \cite[Theorem on page 717]{Tate}.
From \eqref{e:diagram} we obtain a canonical isomorphism
\[\Ch^1_S(B)=\coker\big[H^1(K,B)\to H^1(K_S,B) \big]\isoto \coker\Big[ \big(\Y[\V_L]_0\big)_\Gt\hm\to \Ups_S\Big]\]
where $\Ups_S$ is as in \eqref{e:Ups}.
From the short exact sequence
\[ 0\to \Y[\V_L]_0\to \Y[\V_L]\to Y\to 0\]
we obtain an exact sequence
\[H^{-1}\big(\G, \Y[\V_L]_0\big)\to\bigoplus\limits_{v\in \V_K}
     H^{-1}(\G\hm_w,Y)\to H^{-1}(\G, \Y),\]
which by Lemma \ref{l:torsion-free}  gives an exact sequence
\[ \big(\Y[\V_L]_0\big)_\Gt\ \lra\ \Ups_S\oplus\Ups_{S^\complement}
    \ \labeltoooo{\pi_S +\pi_{S^\complement}}  \Y_\Gt\hs.\]
We see that
\[ \im \Big[ \big(\Y[\V_L]_0\big)_\Gt\hs\to\hs \Ups_S\Big]\,
     =\,\pi_S^{-1}(\im\pi_{S^\complement}),\]
which gives the first isomorphism of the theorem. The second isomorphism is obvious.
\end{proof}

\begin{corollary}\label{c:Sigma}
Let $S'\subset S\subset \V_K$ be two finite  sets of places of $K$
such that for each place $v\in \Sigma\coloneqq S\smallsetminus S'$ and $w\in\V_L$ over $v$,
there exists a place $v^\complement \in S^\complement$
and $w^\complement\in \V_L$ over $v^\complement$
with $\G\hm_{w^\complement}=\G\hm_w$.
Then the natural epimorphism $\Ch^1_S(B)\to \Ch^1_{S'}(B)$ is an isomorphism.
\end{corollary}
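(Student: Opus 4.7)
The plan is to apply Theorem \ref{p:Ch-complement} to both $S$ and $S'$, decompose all the relevant groups along the partitions $S = S'\sqcup\Sigma$ and $S'^\complement = S^\complement\sqcup\Sigma$, and exploit the hypothesis to show that the extra contribution from $\Sigma$ is already absorbed by what gets quotiented out. Writing $\Ups_\Sigma = \bigoplus_{v\in\Sigma}\Y_\Gbvt$, we have direct-sum decompositions $\Ups_S = \Ups_{S'}\oplus\Ups_\Sigma$ and $\Ups_{S'^\complement} = \Ups_{S^\complement}\oplus\Ups_\Sigma$. From the proof of Theorem \ref{p:Ch-complement} the isomorphism $\Ch^1_S(B)\cong\Ups_S/\pi_S^{-1}(\im\pi_{S^\complement})$, together with its analogue for $S'$, identifies the natural epimorphism $\Ch^1_S(B)\twoheadrightarrow\Ch^1_{S'}(B)$ with the map induced on these quotients by the coordinate projection $\Ups_S\twoheadrightarrow\Ups_{S'}$; this last fact holds because restricting cohomology classes from $S$ to $S'$ is literally the coordinate projection under the identification $H^1(K_v,B)\isoto\Y_\Gbvt$ of Lemma \ref{l:torus}.

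The crucial step is to establish the inclusion $\im\pi_\Sigma\subseteq\im\pi_{S^\complement}$. Fix $v\in\Sigma$ with its chosen place $w\in\V_L$ above $v$; the hypothesis supplies $v^\complement\in S^\complement$ and $w^\complement\in\V_L$ above $v^\complement$ with $\G\hm_{w^\complement}=\G\hm_w$. Then the source groups $\Y_{\G\hm_w,\Tors}$ and $\Y_{\G\hm_{w^\complement},\Tors}$ are literally the same finite group, and the homomorphisms $\pi_v$ and $\pi_{v^\complement}$ are both the natural map $\Y_{\G\hm_w,\Tors}\to\Y_\Gt$ induced by the quotient $\Y_{\G\hm_w}\to\Y_\G$; hence $\im\pi_v = \im\pi_{v^\complement}\subseteq\im\pi_{S^\complement}$. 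Summing over $v\in\Sigma$ yields $\im\pi_\Sigma\subseteq\im\pi_{S^\complement}$, and in particular
\[\im\pi_{S'^\complement} \,=\, \im\pi_{S^\complement}+\im\pi_\Sigma \,=\, \im\pi_{S^\complement}.\]

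The rest is elementary manipulation of quotient groups. Using the decomposition $\Ups_S=\Ups_{S'}\oplus\Ups_\Sigma$, a pair $(y_{S'},y_\Sigma)$ lies in $\pi_S^{-1}(\im\pi_{S^\complement})$ iff $\pi_{S'}(y_{S'})+\pi_\Sigma(y_\Sigma)\in\im\pi_{S^\complement}$; since $\pi_\Sigma(y_\Sigma)\in\im\pi_\Sigma\subseteq\im\pi_{S^\complement}$ by the previous step, this is equivalent to $y_{S'}\in\pi_{S'}^{-1}(\im\pi_{S^\complement})$. Therefore
\[\pi_S^{-1}(\im\pi_{S^\complement})\,=\,\pi_{S'}^{-1}(\im\pi_{S^\complement})\,\oplus\,\Ups_\Sigma,\]
and the coordinate projection descends to an isomorphism
\[\Ups_S/\pi_S^{-1}(\im\pi_{S^\complement})\ \isoto\ \Ups_{S'}/\pi_{S'}^{-1}(\im\pi_{S'^\complement}),\]
which is the desired statement. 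The only mild obstacle is bookkeeping: keeping track of the three image subgroups $\im\pi_S$, $\im\pi_{S'}$, $\im\pi_\Sigma$ inside $\Y_\Gt$ and verifying the identification of the natural $\Ch^1$-map with the coordinate projection. Once the key inclusion $\im\pi_\Sigma\subseteq\im\pi_{S^\complement}$ is in hand, everything else is formal.
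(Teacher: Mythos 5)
Your proposal is correct and follows essentially the same route as the paper: the paper's (two-line) proof likewise reduces, via Theorem \ref{p:Ch-complement}, to showing that for $v\in\Sigma$ the summand $\Y_\Gbvt$ of $\Ups_S$ lies in $\pi_S^{-1}(\im\pi_{S^\complement})$, which follows from $\G\hm_{w^\complement}=\G\hm_w$ exactly as in your key step $\im\pi_v=\im\pi_{v^\complement}\subseteq\im\pi_{S^\complement}$. Your write-up merely makes explicit the bookkeeping (the decompositions $\Ups_S=\Ups_{S'}\oplus\Ups_\Sigma$, $\im\pi_{S'^\complement}=\im\pi_{S^\complement}$, and the identification of the natural map with the coordinate projection) that the paper leaves implicit.
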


\begin{proof}
By Theorem \ref{p:Ch-complement}, it suffices to show that for any $v\in \Sigma$,
the direct summand  $\Y_\Gbvt$  of $\Ups_S $ is contained in
$ \pi_S^{-1}(\im\pi_{S^\complement})$.
This follows from the equality $\G\hm_{w^\complement}=\G\hm_w$.
\end{proof}

\begin{corollary}[Sansuc \cite{Sansuc}]
\label{c:Sansuc-Ch}
Let $S_{\rm nc}\subseteq S$ (resp.  $S_{\rm c}\subseteq S$)
denote the subset of places with non-cyclic (resp., cyclic)
decomposition groups in $\G=\Gal(L/K)$.
Then the natural epimorphism $\Ch^1_S(B)\to \Ch^1_{S_{\rm nc}}(B)$ is an isomorphism.
\end{corollary}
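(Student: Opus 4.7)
The plan is to reduce this to an immediate application of Corollary \ref{c:Sigma} with $S'=S_{\rm nc}$, so that $\Sigma=S\smallsetminus S'=S_{\rm c}$ is the subset of places of $S$ whose decomposition groups in $\G=\Gal(L/K)$ are cyclic. The single hypothesis that has to be verified is that for each $v\in S_{\rm c}$ and each $w\in\V_L$ over $v$ there exists $v^\complement\in S^\complement$ and $w^\complement\in\V_L$ over $v^\complement$ with $\G\hm_{w^\complement}=\G\hm_w$.

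To supply such a $v^\complement$ I would invoke the Chebotarev density theorem for the finite Galois extension $L/K$, which holds uniformly in the number field case and in the global function field case (using the Riemann hypothesis for curves over finite fields in the latter). Choose a generator $c$ of the cyclic group $\G\hm_w$, so $\G\hm_w=\langle c\rangle$. Chebotarev guarantees infinitely many places of $K$ that are unramified in $L/K$ and whose Frobenius conjugacy class in $\G$ is the conjugacy class of $c$. Since $S$ is finite, at least one such place $v^\complement$ lies in $S^\complement$. For an appropriate choice of $w^\complement\in\V_L$ above $v^\complement$ we have $\Frob_{w^\complement}=c$, and because $v^\complement$ is unramified in $L/K$, the decomposition group $\G\hm_{w^\complement}$ is exactly the cyclic subgroup generated by $\Frob_{w^\complement}$, namely $\langle c\rangle=\G\hm_w$. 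Feeding this into Corollary \ref{c:Sigma} yields the desired isomorphism $\Ch^1_S(B)\isoto\Ch^1_{S_{\rm nc}}(B)$.

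The argument is essentially mechanical once one has the correct input, so there is no serious obstacle; the only point requiring a little care is that Corollary \ref{c:Sigma} asks for the equality $\G\hm_{w^\complement}=\G\hm_w$ and not merely conjugacy, but this is arranged by choosing the particular place $w^\complement$ above $v^\complement$ whose Frobenius equals the chosen generator $c$ rather than a $\G$-conjugate of it. The applicability of Chebotarev in the function field case is the one external ingredient to flag, and it is classical.
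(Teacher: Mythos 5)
Your proof is correct and follows essentially the same route as the paper: both reduce to Corollary \ref{c:Sigma} with $\Sigma=S_{\rm c}$ and use the Chebotarev density theorem to produce, for each $v\in S_{\rm c}$, a place $v^\complement\in S^\complement$ with a place $w^\complement$ above it whose decomposition group equals $\G\hm_w$. Your extra remark on choosing the particular $w^\complement$ whose Frobenius is the chosen generator (to get equality of decomposition groups rather than mere conjugacy) is a correct refinement of the step the paper leaves implicit.
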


\begin{proof}
Let $v\in S_{\rm c}$ and $w$ be a place of $L$ over $v$.
Since the decomposition group $\G\hm_\bv\subset\G$ is cyclic and the set $S$ is finite,
by the Chebotarev density theorem,
see, for instance, Neukirch \cite[\hs Chapter VII, Theorem (13.4)\hs]{Neukirch},
there exist $v^\complement\in S^\complement$ and $\bv^\complement\in\V_L$ over $v^\complement$
with decomposition group $\G\hm_{\bv^\complement}=\G\hm_\bv$,
and we conclude by Corollary \ref{c:Sigma}.
\end{proof}

\section{$A_S(G)$ in terms of $\X_*(B)$ and  in terms of $\pi_1^\alg(G)$}
\label{s:main}

\begin{subsec}\label{ss:MLG}
Let $G$ be a reductive group over a global field $K$.
Write $M=\pia(G)$.
Let $T\subseteq G$ be a maximal torus,
and let $L/K$ be a finite Galois extension in $K^s$ splitting $T$;
then $\G\coloneqq \Gal(L/K)$ naturally acts on $M$.

Choose  an $L/K$-free resolution \eqref{e:L/K-free} as in Definition \ref{d:L/K-free},
 where $B$ is a $K$-torus, and  consider the sequence
of $\Gal(L/K)$-modules
\begin{equation}\label{e:Y-M'-M}
0\to \Y\to M'\to M\to 0
\end{equation}
where
\[\Y=\pia(B)=\X_*(B)\quad\ \text{and}\quad\   M'=\pia(G').\]
This sequence is exact; see \cite[Proposition 6.8]{CT-RF} or \cite[Lemma  6.2.4]{BK}.
Since $\G$ acts on $\Y$ and on $L$, it naturally acts on $\V_L$, on $\Y[\V_L]$, and on $\Y[\V_L]_0$\hs.
\end{subsec}

\begin{theorem}\label{t:AS-0}
For  an $L/K$-free resolution \eqref{e:L/K-free} and a finite set of places $S\subset \V_K$\hs, there is a canonical isomorphism
\[A_S(G)\,\isoto\,\coker\Big[\big(\Y[\V_L]_0\big)_\Gt\to \bigoplus_{v\in S} \Y_\Gbvt\Big].  \]
\end{theorem}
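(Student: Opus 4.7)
The plan is to deduce Theorem \ref{t:AS-0} by combining Theorem \ref{t:FR} with the Tate--Nakayama identifications supplied by Lemma \ref{l:torus}, in the spirit of the proof of Theorem \ref{p:Ch-complement}. First I would verify the hypotheses of Theorem \ref{t:FR} for the given $L/K$-free resolution $1\to B\to G'\to G\to 1$. By Definition \ref{d:L/K-free-group}, $G'$ is quasi-trivial. The $K$-torus $B$ is a normal connected subgroup of the connected reductive $K$-group $G'$, and since $\Aut(B)$ has a discrete identity component, the conjugation action of $G'$ on $B$ is trivial; hence $B$ is central and smooth in $G'$. Theorem \ref{t:FR} therefore provides a canonical isomorphism
\[A_S(G)\,\isoto\,\Ch^1_S(B)\,=\,\coker\Big[H^1(K,B)\to\bigoplus_{v\in S}H^1(K_v,B)\Big].\]

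Next I would invoke Lemma \ref{l:torus}. Writing $\Y=\X_*(B)=\pia(B)$, the lemma yields canonical isomorphisms $H^1(K,B)\isoto\big(\Y[\V_L]_0\big)_\Gt$ and $H^1(K_v,B)\isoto\Y_\Gbvt$. It remains to check that, under these identifications, the localization map $H^1(K,B)\to\bigoplus_{v\in S}H^1(K_v,B)$ is transported to the natural homomorphism $\big(\Y[\V_L]_0\big)_\Gt\to\bigoplus_{v\in S}\Y_\Gbvt$ induced, at each $v\in S$, by sending a class $[\sum_{w\in\V_L} m_w\cdot w]$ to the class of its coefficient $m_w$ at the chosen place $w\in\V_L$ lying above $v$. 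This compatibility is precisely the content of the commutative diagram \eqref{e:diagram} used in the proof of Theorem \ref{p:Ch-complement}, restricted from $\V_K$ to the finite subset $S$. Passing to cokernels in that diagram then gives the isomorphism asserted by Theorem \ref{t:AS-0}.

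The only genuine point requiring care is the naturality of Tate's duality with respect to localization, i.e.\ the fact that restriction to the decomposition subgroup $\Gbv\subseteq\G$ on the Galois-cohomology side matches projection onto the $w$-component on the Tate-cohomology side. This naturality is intrinsic to Tate's construction and has already been used in the proof of Theorem \ref{p:Ch-complement}, so no new number-theoretic input is required beyond what is assembled in Sections \ref{s:QT} and \ref{s:Ch}. In effect, Theorem \ref{t:AS-0} is a direct repackaging of those two results in the specific setting of an $L/K$-free resolution, and the main task in writing it out is to ensure that the central, smooth hypothesis on $B$ is verified and that the Tate--Nakayama diagram is transcribed correctly.
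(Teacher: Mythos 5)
Your proposal is correct and follows essentially the same route as the paper: apply Theorem \ref{t:FR} to the $L/K$-free resolution (whose hypotheses hold since a normal torus in a connected group is automatically central, a point the paper leaves implicit) to get $A_S(G)\isoto\Ch^1_S(B)$, and then use the Tate--Nakayama diagram \eqref{e:diagram} together with Lemma \ref{l:torus} to rewrite $\Ch^1_S(B)$ as the stated cokernel. The paper's proof is exactly this two-step reduction, so no further comparison is needed.
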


\begin{proof}
From diagram \eqref{e:diagram} we obtain that
\begin{align*}
\Ch^1_S(B)\coloneqq \coker\Big[H^1(K,B)\to H^1(K_S,B)\Big]\cong
                    \coker\Big[\big(\Y[\V_L]_0\big)_\Gt\to\bigoplus\limits_{v\in S}\Y_\Gbvt \Big]&,
\end{align*}
and the theorem follows from Theorem \ref{t:FR} giving an isomorphism
$A_S(G)\isoto \Ch^1_S(B)$.
\end{proof}

\begin{theorem}\label{t:AS-0-cor}
For an $L/K$-free resolution \eqref{e:L/K-free} and a finite set of places $S\subset \V_K$\hs,
there is a canonical isomorphism
\[A_S(G)\,\isoto\,
   \im \pi_{S_\nc}\hs /\big(\im\pi_{S_\nc}\cap\im \pi_{S^\complement}\hs\big).\]
\end{theorem}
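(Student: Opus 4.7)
The plan is to chain together the results already established. First, since the $L/K$-free resolution \eqref{e:L/K-free} expresses $G$ as a quotient of a quasi-trivial $K$-group $G'$ by the smooth central $K$-subgroup $B$, Theorem \ref{t:FR} gives the identification $A_S(G) \isoto \Ch^1_S(B)$. Next, Corollary \ref{c:Sansuc-Ch} collapses the set $S$ to its non-cyclic part: the natural epimorphism $\Ch^1_S(B) \to \Ch^1_{S_\nc}(B)$ is an isomorphism. Applying Theorem \ref{p:Ch-complement} now with $S_\nc$ in place of $S$, I obtain
\[
\Ch^1_{S_\nc}(B)\,\cong\,\im\pi_{S_\nc}\big/\big(\im\pi_{S_\nc}\cap\im\pi_{S_\nc^\complement}\big).
\]

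What remains is to replace $\im\pi_{S_\nc^\complement}$ by $\im\pi_{S^\complement}$ inside the intersection. I would decompose $S_\nc^\complement = S^\complement\sqcup S_c$ as a subset of $\V_K$, which yields the direct-sum decomposition $\Ups_{S_\nc^\complement}=\Ups_{S^\complement}\oplus\Ups_{S_c}$, hence
\[
\im\pi_{S_\nc^\complement}\,=\,\im\pi_{S^\complement}\,+\,\im\pi_{S_c}.
\]
To show the right-hand side equals $\im\pi_{S^\complement}$, I invoke the same Chebotarev argument used in Corollary \ref{c:Sansuc-Ch}: for each $v\in S_c$ with chosen place $w\in\V_L$ over $v$, the decomposition group $\Gbv$ is cyclic, so by Chebotarev's density theorem there exists $v^\complement\in S^\complement$ with a place $w^\complement\in\V_L$ over $v^\complement$ satisfying $\G_{w^\complement}=\Gbv$. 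Then the summand $\Y_{\Gbv,\Tors}$ appears identically in $\Ups_{S_c}$ (at $v$) and in $\Ups_{S^\complement}$ (at $v^\complement$), and both summands map to $\Y_{\G,\Tors}$ by the same projection, so $\im\pi_v\subseteq\im\pi_{S^\complement}$. Summing over $v\in S_c$ gives $\im\pi_{S_c}\subseteq\im\pi_{S^\complement}$, hence equality above.

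Combining the three isomorphisms with this equality completes the argument. There is no real obstacle here: the work has all been done, and the proof is essentially a repackaging in which Theorem \ref{t:FR} reduces to tori, Corollary \ref{c:Sansuc-Ch} discards the cyclic part of $S$, and Theorem \ref{p:Ch-complement} rewrites the resulting $\Ch^1$ group in the stated quotient form. The only minor subtlety is the notational swap $\im\pi_{S_\nc^\complement}\rightsquigarrow\im\pi_{S^\complement}$, which is justified by the very Chebotarev computation driving Corollary \ref{c:Sansuc-Ch}.
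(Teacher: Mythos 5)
Your proposal is correct and follows essentially the same route as the paper: Theorem \ref{t:FR}, then Corollary \ref{c:Sansuc-Ch}, then Theorem \ref{p:Ch-complement} applied to $S_\nc$, and finally the identification $\im\pi_{(S_\nc)^\complement}=\im\pi_{S^\complement}$ via Chebotarev. The paper states this last equality without elaboration, whereas you spell out the decomposition $(S_\nc)^\complement=S^\complement\sqcup S_{\rm c}$ and the inclusion $\im\pi_{S_{\rm c}}\subseteq\im\pi_{S^\complement}$; that is a welcome but inessential amplification.
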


\begin{proof}
By Theorem \ref{t:FR} we have a canonical isomorphism $A_S(G)\isoto\Ch^1_S(B)$,
and by Corollary \ref{c:Sansuc-Ch} we have a canonical isomorphism
$\Ch^1_S(B)\isoto\Ch^1_{S_\nc}(B)$.
By Theorem \ref{p:Ch-complement} we have a canonical isomorphism
\[ \Ch^1_{S_\nc}(B)\isoto
\im \pi_{S_\nc} /\big(\im\pi_{S_\nc}\cap\im \pi_{(S_\nc)^\complement}\hs\big).\]
By the Chebotarev density theorem, we have
\[\im \pi_{(S_\nc)^\complement}=\im \pi_{S^\complement}.\]
Thus we obtain a desired canonical isomorphism of the theorem.
\end{proof}

\begin{corollary}[Sansuc \cite{Sansuc}]
Assume that $G$ splits over a finite Galois extension $L/K$, and
let $S_{\rm nc}\subseteq S$  denote the subset of places with non-cyclic
decomposition group in $\G=\Gal(L/K)$.
Then the natural epimorphism $A_S(G)\to A_{S_{\rm nc}}(G)$ is an isomorphism.
\end{corollary}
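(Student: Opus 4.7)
The plan is to reduce the statement about $A_S(G)$ to the already-proved statement about $\Ch^1_S(B)$ for a torus $B$ via an $L/K$-free resolution. Since $G$ splits over the finite Galois extension $L/K$, the existence result proved earlier (the proposition following Definition \ref{d:L/K-free}) supplies an $L/K$-free resolution
\[ 1\to B\to G'\to G\to 1, \]
where $G'$ is an $L/K$-free (in particular quasi-trivial) reductive $K$-group and $B$ is a $K$-torus split by $L$.

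First, I would apply Theorem \ref{t:FR} to this resolution with respect to both $S$ and $S_{\rm nc}$, obtaining canonical isomorphisms
\[ A_S(G)\isoto \Ch^1_S(B) \quad\text{and}\quad A_{S_{\rm nc}}(G)\isoto \Ch^1_{S_{\rm nc}}(B). \]
Both of these isomorphisms are induced by the same connecting homomorphism $\partial\colon G(K_S)\to H^1(K_S,B)$ (restricted to $S_{\rm nc}$ in the second case), so they fit into a commutative square with the natural surjection $A_S(G)\onto A_{S_{\rm nc}}(G)$ on the left and the natural surjection $\Ch^1_S(B)\onto \Ch^1_{S_{\rm nc}}(B)$ on the right. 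Then I would invoke Corollary \ref{c:Sansuc-Ch}, which already proves that the latter surjection is an isomorphism, to conclude.

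The only step that requires a moment's thought is the compatibility of the two applications of Theorem \ref{t:FR} with the restriction map $A_S(G)\to A_{S_{\rm nc}}(G)$; this is however immediate from functoriality of the connecting map in the sequence
\[ G'(K_S)\to G(K_S)\to H^1(K_S,B) \]
with respect to the projection $G(K_S)\to G(K_{S_{\rm nc}})$. With that in place, the corollary follows by a diagram chase: the right vertical arrow being an isomorphism forces the left one to be an isomorphism as well. The main (and only) non-trivial input is Corollary \ref{c:Sansuc-Ch}, whose proof rests on the Chebotarev density theorem to replace a cyclic decomposition group at $v\in S$ by a decomposition group at some $v^\complement\in S^\complement$.
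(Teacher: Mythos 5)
Your proof is correct and is essentially identical to the paper's: the paper also takes an $L/K$-free resolution, forms the commutative square relating $A_S(G)\to A_{S_{\rm nc}}(G)$ to $\Ch^1_S(B)\to \Ch^1_{S_{\rm nc}}(B)$ via the isomorphisms of Theorem \ref{t:FR}, and concludes from Corollary \ref{c:Sansuc-Ch}. Your remark on the compatibility of the connecting maps just makes explicit the commutativity that the paper asserts in its diagram.
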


\begin{proof}
For an $L/K$-free resolution \eqref{e:L/K-free} as in Definition \ref{d:L/K-free},
 where $B$ is a $K$-torus,
consider the commutative diagram
\[
\xymatrix{
A_S(G)\ar[r]^-\sim\ar[d]           &\Ch^1_S(B)\ar[d]^-\sim\\
A_{S_{\rm nc}}(G)\ar[r]^-\sim   & \Ch^1_{S_{\rm nc}}(B)
}
\]
in which the horizontal arrows are isomorphisms of  Theorem  \ref{t:FR}.
By Corollary  \ref{c:Sansuc-Ch} the right-hand vertical arrow is an isomorphism.
Thus the left-hand vertical arrow is an isomorphism as well, as desired.
\end{proof}

\begin{corollary}[{Sansuc \cite[Corollary 3.5]{Sansuc}}] \
\begin{enumerate}

\item[\rm(i)]  If $G$ splits over a finite Galois extension $L/K$ and all places in $S$ have cyclic decomposition group in $\Gal(L/K)$,
then $G$ has the weak approximation property in $S$.

\item[\rm (ii)] $G$ has the weak approximation property in $\V_\infty(K)$.

\end{enumerate}
\end{corollary}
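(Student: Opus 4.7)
The proof reduces immediately to the previous corollary, so the plan is mostly about verifying the hypotheses.

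For part (i), since $G$ splits over the finite Galois extension $L/K$, the proposition following Definition \ref{d:L/K-free} gives an $L/K$-free resolution of $G$, and the preceding corollary applies. By hypothesis, every $v\in S$ has cyclic decomposition group in $\G=\Gal(L/K)$, so the subset $S_{\rm nc}\subseteq S$ of places with non-cyclic decomposition group is empty. The preceding corollary then gives
\[A_S(G)\,\cong\, A_{S_{\rm nc}}(G)\,=\,A_\emptyset(G)\,=\,1,\]
since $G(K_\emptyset)$ is the trivial group. Equivalently, one can read off the vanishing directly from Theorem \ref{t:AS-0-cor}: with $S_{\rm nc}=\emptyset$, the direct sum $\bigoplus_{v\in S_{\rm nc}}\Y_\Gbvt$ is zero, so $\im\pi_{S_{\rm nc}}=0$ and the quotient describing $A_S(G)$ is trivial. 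Hence $\overline{G(K)}_S=G(K_S)$, i.e., $G$ has (WA$_S$).

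For part (ii), I would split into the two global field cases. In the function field case, $\V_\infty(K)=\emptyset$ by the convention used in Theorem \ref{t:scss}, so (WA$_{\V_\infty(K)}$) holds vacuously. In the number field case, choose any finite Galois extension $L/K$ in $K^s$ that splits $G$ (such an $L$ exists because $G$ is of finite type over $K$). For any $v\in\V_\infty(K)$ and any $w\in\V_L$ above $v$, the completion $K_v$ is either $\R$ or $\C$, and $L_w$ is $\R$ or $\C$; therefore the decomposition group $\G\hm_w=\Gal(L_w/K_v)$ has order $1$ or $2$, and in particular is cyclic. Thus the hypothesis of (i) is satisfied for $S=\V_\infty(K)$, and (i) yields the conclusion.

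There is no real obstacle here: the content has been absorbed into Theorem \ref{t:AS-0-cor} and the preceding corollary, and the only thing to check is the elementary fact that archimedean decomposition groups are cyclic (together with the vacuous case of function fields). I would present the argument in this order to emphasize that (ii) is a direct specialization of (i).
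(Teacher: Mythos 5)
Your proof is correct and matches the paper's approach: the paper states this corollary with no written proof, treating it as an immediate consequence of the preceding corollary (equivalently of Theorem \ref{t:AS-0-cor}), and your argument — $S_{\rm nc}=\emptyset$ forces $A_S(G)$ to be trivial, plus the observations that $\V_\infty(K)=\emptyset$ for function fields and that archimedean decomposition groups have order at most $2$ and hence are cyclic — is exactly the intended reasoning.
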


\begin{theorem}\label{t:AS-H1}
Let $G$ be a reductive group over a global field $K$,
and let $M$, $L$, and $\G$ be as in \ref{ss:MLG}.
Then there is a canonical isomorphism
\[ A_S(G)\isoto\coker\Big[H_1\big(\G,M[\V_L]_0\big)\to \bigoplus_{v\in S} H_1(\G\hm_\bv,M)\Big]\]
where $H_1$ denotes group homology.
\end{theorem}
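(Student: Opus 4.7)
The plan is to deduce Theorem~\ref{t:AS-H1} from Theorem~\ref{t:AS-0} by a dimension-shifting argument in Tate cohomology applied to the exact sequence~\eqref{e:Y-M'-M}
\[ 0\to \Y\to M'\to M\to 0 \]
coming from an $L/K$-free resolution $1\to B\to G'\to G\to 1$. The key observation is that $M'=\pia(G')$ is a \emph{free} $\Z[\G]$-module. Indeed, since $G^{\prime\,\sss}$ is simply connected, one has $\pia(G')\cong \X_*(G^{\prime\,\tor})$, and $G^{\prime\,\tor}$ is $L/K$-free by Definition~\ref{d:L/K-free-group}, so $\X_*(G^{\prime\,\tor})$ is $\Z[\G]$-free. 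In particular $M'$ is cohomologically trivial as a $\G$-module and as a $\Gbv$-module for every $v$.

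To handle $M[\V_L]_0$, I would tensor~\eqref{e:Y-M'-M} with the $\Z$-flat module $\Z[\V_L]_0$ to obtain the exact sequence
\[ 0\to \Y[\V_L]_0\to M'[\V_L]_0\to M[\V_L]_0\to 0, \]
and observe that $M'[\V_L]_0$ is again $\Z[\G]$-free. This follows from the standard $\Z[\G]$-linear isomorphism $\Z[\G]\otimes_\Z N\isoto \Z[\G]\otimes_\Z N_{\mathrm{triv}}$, $g\otimes n\mapsto g\otimes g^{-1}n$, valid for any abelian group $N$, applied with $N=\Z[\V_L]_0$ to each $\Z[\G]$-summand of $M'$. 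Hence $M'[\V_L]_0$ is $\G$-cohomologically trivial, and dimension shifting together with Lemma~\ref{l:torsion-free} yields a canonical isomorphism
\[ H_1(\G,M[\V_L]_0)\isoto H^{-1}(\G,\Y[\V_L]_0)=(\Y[\V_L]_0)_{\G,\Tors}. \]
Restricting~\eqref{e:Y-M'-M} to $\Gbv$ and applying the same shift produces $H_1(\Gbv,M)\isoto \Y_{\Gbv,\Tors}$ for every $v\in S$. Substituting both isomorphisms into the cokernel description of Theorem~\ref{t:AS-0} delivers the formula of Theorem~\ref{t:AS-H1}, provided the relevant arrows match.

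The remaining point, which I expect to be the main obstacle, is precisely this compatibility of arrows: one must verify that the map $H_1(\G,M[\V_L]_0)\to\bigoplus_{v\in S}H_1(\Gbv,M)$, defined componentwise by restriction from $\G$ to $\Gbv$ followed by the $\Gbv$-equivariant evaluation $M[\V_L]_0\to M$ at the chosen place $\bv$, corresponds under the two dimension-shifting isomorphisms to the evaluation map $(\Y[\V_L]_0)_{\G,\Tors}\to\bigoplus_v\Y_{\Gbv,\Tors}$ of Theorem~\ref{t:AS-0}. This amounts to a functoriality assertion for the Tate-cohomology connecting homomorphism with respect to the obvious morphism of short exact sequences obtained by evaluating at $\bv$, and should reduce to a routine diagram chase once the notation is in place.
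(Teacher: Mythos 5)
Your proposal is correct and takes essentially the same route as the paper: both deduce the theorem from Theorem~\ref{t:AS-0} via the sequence~\eqref{e:Y-M'-M}, using the $\Z[\G]$-freeness of $M'$ (hence the vanishing of the relevant homology and torsion coinvariants of $M'$ and $M'[\V_L]_0$, which the paper gets from Lemma~\ref{l:Gt0} and Corollary~\ref{c:Gt0}) to shift $H_1$ of the $M$-terms onto the torsion coinvariants of the $\Y$-terms. The compatibility of arrows you flag is handled in the paper as a commutative ladder of the two long exact sequences, i.e.\ exactly the naturality check you anticipate.
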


We need a lemma and a corollary.

\begin{lemma}\label{l:Gt0}
Let $\G$ be a finite group, and let $N$ be a finitely generated {\emm free} $\Z[\G]$-module.
Then
\begin{enumerate}
\item[\rm(i)] $N_\Gt=0$, and
\item[\rm(ii)] $H_i(\G,N)=0$ for all $i\ge 1$.
\end{enumerate}
\end{lemma}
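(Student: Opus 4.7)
The plan is to reduce both statements to the case of a rank-one free module $N=\Z[\G]$ by using that a finitely generated free $\Z[\G]$-module is a finite direct sum $N\cong\Z[\G]^n$, and that both coinvariants and group homology commute with finite direct sums.

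For (i), I would compute $(\Z[\G])_\G$ explicitly: the augmentation map $\varepsilon\colon\Z[\G]\to\Z$, sending each $g\in\G$ to $1$, factors through the coinvariants because $\varepsilon(g\cdot x)=\varepsilon(x)$, and conversely its kernel is exactly the augmentation ideal, which is generated by the elements $(g-1)x$ for $g\in\G$, $x\in\Z[\G]$. Therefore $(\Z[\G])_\G\cong\Z$, and so $N_\G\cong\Z^n$ is a free abelian group of finite rank. In particular it is torsion-free, which gives $N_\Gt=(N_\G)_\Tors=0$.

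For (ii), the cleanest route is Shapiro's lemma: writing $\Z[\G]=\Ind_{\{1\}}^{\G}\Z$, we have $H_i(\G,\Z[\G])\cong H_i(\{1\},\Z)$, which vanishes in positive degrees. Alternatively, one identifies $H_i(\G,-)=\Tor_i^{\Z[\G]}(\Z,-)$; since $\Z[\G]$ is a free, hence flat, $\Z[\G]$-module, these $\Tor$ groups vanish for $i\ge 1$. Taking the direct sum of $n$ copies yields $H_i(\G,N)=0$ for $i\ge 1$.

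There is no real obstacle in this lemma; the only point to be careful about is keeping track of which definition of coinvariants and group homology one uses, and making sure the direct-sum decomposition is invoked consistently. Everything else follows from standard facts about induced modules and Shapiro's lemma.
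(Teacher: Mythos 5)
Your proof is correct and follows essentially the same route as the paper, which also reduces via Shapiro's lemma to the case $\G=1$, $N=\Z$, where both assertions are immediate. Your explicit computation of $(\Z[\G])_\G\cong\Z$ via the augmentation map is just the degree-zero instance of that same reduction, spelled out in more detail.
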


\begin{proof}
By Shapiro's lemma we can reduce our lemma to the case $\G=1$ and $M=\Z$, in which  assertions (i) and (ii) are obvious.
\end{proof}

\begin{corollary}\label{c:Gt0}
Let $L/K$ be a finite Galois extension of global fields with Galois group $\G=\Gal(L/K)$,
and let  $N$ be a finitely generated {\emm free} $\Z[\G]$-module.
Then
\begin{enumerate}
\item[\rm(i)] $\big(N[\V_L]_0\big)_\Gt=0$, and
\item[\rm(ii)] $H_i(\G,N[\V_L]_0)=0$ for all $i\ge 1$.
\end{enumerate}
\end{corollary}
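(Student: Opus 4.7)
The plan is to reduce the corollary to Lemma~\ref{l:Gt0} applied to $N[\V_L]$ itself, then extract the two assertions about $N[\V_L]_0$ from the long exact sequence in group homology associated with the short exact sequence
\[
0\to N[\V_L]_0\to N[\V_L]\to N\to 0,
\]
where the right-hand map is the augmentation $\sum n_w\cdot w\mapsto\sum n_w$.

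The first (and main) step is to verify that $N[\V_L]$ is itself a free $\Z[\G]$-module. Writing $N[\V_L]=N\otimes_\Z\Z[\V_L]$ with the diagonal $\G$-action and reducing to the case $N=\Z[\G]$ by a direct-sum decomposition, I would invoke the shearing isomorphism
\[
\Z[\G]\otimes_\Z\Z[\V_L]\isoto\Z[\G]\otimes_\Z\Z[\V_L],\qquad g\otimes w\mapsto g\otimes g^{-1}w,
\]
which intertwines the diagonal action on the source with the action on the first tensor factor only on the target. The target is manifestly $\Z[\G]$-free on $\{1\otimes w:w\in\V_L\}$, so $N[\V_L]$ is a (possibly infinite-rank) free $\Z[\G]$-module. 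I will note that Lemma~\ref{l:Gt0} extends verbatim to free modules of infinite rank, since both $(-)_\G$ and $H_i(\G,-)$ commute with arbitrary direct sums and the lemma's Shapiro-based proof goes through unchanged for each summand.

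Applying this extended Lemma~\ref{l:Gt0} to $N[\V_L]$ yields $(N[\V_L])_{\G,\Tors}=0$ (in fact $(N[\V_L])_\G$ is a direct sum of copies of $\Z$, hence torsion-free) and $H_i(\G,N[\V_L])=0$ for all $i\ge 1$. Applying it to $N$ gives $H_i(\G,N)=0$ for all $i\ge 1$. The relevant slice of the long exact sequence
\[
H_{i+1}(\G,N)\to H_i(\G,N[\V_L]_0)\to H_i(\G,N[\V_L])
\]
then has vanishing outer terms for every $i\ge 1$, proving (ii). For (i), the piece
\[
H_1(\G,N)\to H_0(\G,N[\V_L]_0)\to H_0(\G,N[\V_L])
\]
together with $H_1(\G,N)=0$ produces an injection $(N[\V_L]_0)_\G\hookrightarrow(N[\V_L])_\G$; since the target is torsion-free, so is the source, which is exactly~(i).

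The only conceptually nontrivial point I expect to have to handle carefully is the shearing step and the accompanying remark that Lemma~\ref{l:Gt0} applies to free $\Z[\G]$-modules of infinite rank. Once these are in place, the rest is a routine unfolding of the long exact sequence in homology.
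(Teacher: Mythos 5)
Your proposal is correct, and its skeleton --- the short exact sequence $0\to N[\V_L]_0\to N[\V_L]\to N\to 0$, its long exact homology sequence, and Lemma~\ref{l:Gt0} applied to $N$ --- coincides with the paper's. The one genuine divergence is how the middle term is handled. The paper decomposes $\V_L$ into $\G$-orbits, writes $N[\V_L]\cong\bigoplus_{v\in\V_K}\Ind_{\Gbv}^{\G}N$, and applies Shapiro's lemma orbit by orbit, using that the restriction of the free module $N$ to each decomposition group $\Gbv$ is still free; this yields the explicit local description $H_i(\G,N[\V_L])\cong\bigoplus_v H_i(\Gbv,N)$ and $\big(N[\V_L]\big)_{\G,\Tors}\cong\bigoplus_v N_{\Gbv,\Tors}$, in the same style as the local computations elsewhere in the paper. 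You instead untwist the diagonal action by the shearing map $g\otimes w\mapsto g\otimes g^{-1}w$ (your equivariance check is right, and the map is a bijection on the $\Z$-basis) to exhibit $N[\V_L]$ as a free $\Z[\G]$-module of infinite rank, and then invoke Lemma~\ref{l:Gt0} once, correctly flagging that the lemma extends to infinite rank because $(-)_\G$ and $H_i(\G,-)$ commute with arbitrary direct sums. Your route never needs to mention decomposition groups and gives the slightly stronger observation that $\big(N[\V_L]\big)_\G$ is outright torsion-free; the paper's route reuses Shapiro's lemma, which already drives Lemma~\ref{l:Gt0} itself. The final extraction of (i) from the injection $\big(N[\V_L]_0\big)_\G\hookrightarrow\big(N[\V_L]\big)_\G$ and of (ii) from the two-sided vanishing is the same in both arguments.
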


\begin{proof}
By \cite[Theorem A.1.1]{BK}, the short exact sequence
\[ 0\to N[\V_L]_0\to N[\V_L]\to N\to 0\]
gives rise to a homology exact sequence
\begin{align}
\dots\to H_2(\G,N[\V_L]_0)\to H_2(\G,&N[\V_L])\to H_2(\G,N)\notag\\
\to H_1(\G,N[\V_L]_0)\to &H_1(\G,N[\V_L])\to H_1(\G,N)\label{e:exact-N}\\
\to\big(N[\V_L]_0&\big)_\Gt\to \big(N[\V_L]\big)_\Gt\to N_\Gt\notag
\end{align}
By Lemma \ref{l:Gt0} we have  $H_i(\G,N)=0$ for all $i\ge 1$.
Moreover, we have canonical isomorphisms
\begin{equation*}
\begin{aligned}
& H_i(\G,N[\V_L]\cong\bigoplus_{v\in \V_K}\! H_i(\G\hm_\bv,N)=0\ \,\ \text{for all $i\ge 1,$}\\
&\big(N[\V_L]\big)_\Gt\cong\bigoplus_{v\in\V_K}\! N_\Gbvt=0\
\end{aligned}
\end{equation*}
by Shapiro's lemma and   Lemma \ref{l:Gt0} (because the $\Z[\G]$-free module $N$ is $\Z[\G\hm_\bv]$-free).
Now the corollary follows from the exactness of \eqref{e:exact-N}.
\end{proof}

\begin{proof}[Proof of Theorem \ref{t:AS-H1}]
For an $L/K$-free resolution  of Definition \ref{d:L/K-free},  the short exact sequence
of $\Gal(L/K)$-modules \eqref{e:Y-M'-M}
induces a short exact sequence
\[ 0\to \Y[\V_L]_0\to M'[\V_L]_0\to M[\V_L]_0\to 0\]
and a commutative diagram with exact rows
\begin{equation*}
\xymatrix{
0=H_1(\G, M'[\V_L]_0)\ar[r] &H_1(\G, M[\V_L]_0)\ar[r]\ar[d]^{l_1} &\big(\Y[V_L]_0\big)_\Gt\ar[r]\ar[d]^{l_0} &\big(M'[V_L]_0\big)_\Gt=0\\
0=\bigoplus\limits_{v\in S}\! H_1(\G\hm_\bv,M')\ar[r] &\bigoplus\limits_{v\in S}\! H_1(\G\hm_\bv,M)\ar[r]
&\bigoplus\limits_{v\in S}\! \Y_\Gbvt\ar[r] &\bigoplus\limits_{v\in S} (M')_\Gbvt=0
}
\end{equation*}
In this diagram, the zeros in the bottom row are explained by Lemma \ref{l:Gt0},
and the zeros in the top row are explained by Corollary \ref{c:Gt0}.
The diagram induces an isomorphism
$\ \coker l_1\isoto \coker l_0$.\
By Theorem \ref{t:AS-0} we have an isomorphism $A_S(G)\cong \coker l_0$,
which gives a desired isomorphism $A_S(G)\cong \coker l_1$.
\end{proof}

\newcommand{\Cyc}{{\rm Cyc}}

\begin{subsec}
Consider the  sets $S$, $S^\complement$; then $S\cup S^\complement=\V_K$\hs.
Let  $S_\nc\subseteq S$ and $(S^\complement)_\nc\subset S^\complement$
be the subsets consisting of all places $v$ with {\em non-cyclic} decomposition group $\G\hm_w$
where $w$ is a place of $L$ over $v$.
Let $\Cyc(\G)$ denote the set of all {\em cyclic} subgroups $\D\subseteq\G$.

Consider the groups
\[\Xi_S= \bigoplus_{v\in S} H_1(\G\hm_\bv,M)\quad\ \text{and}\quad\ \Xi_{S^\complement}= \bigoplus_{v\in S^\complement}\! H_1(\G\hm_\bv, M).\]
Moreover, consider the groups
\[\Xi_{S_\nc}= \bigoplus_{v\in S_\nc} H_1(\G\hm_\bv,M)\quad\ \text{and}\quad\ \Xi_{(S^\complement)_\nc}= \bigoplus_{v\in (S^\complement)_\nc}\! H_1(\G\hm_\bv, M).\]
Furthermore, consider the group
\[\Xi_\Cyc= \bigoplus_{\D\in\Cyc(\G)} H_1(\D,M).\]

For $v\in\V_K$ consider the natural corestriction homomorphism
\[\tau_v\colon H_1(\G\hm_\bv,M)\to H_1(\G,M).\]
We have a natural homomorphism
\begin{equation*}
\tau_{S\phantom{^\complement}}\colon\, \Xi_{S}\to H_1(\G,M),  \ \,\big[ \xi_v\big]_{v\in S}\mapsto \Bigg[\,\sum_{v\in S_\nc}\tau_v(\xi_v)\,\Bigg].
\end{equation*}
Moreover,  we have a natural homomorphism
\begin{equation*}
\tau_{S^\complement}\!\colon\,  \Xi_{S^\complement}\!\to H_1(\G,M).
\end{equation*}
Furthermore, we have  natural homomorphisms
\[
\tau_{S_\nc\phantom{^\complement}\!\!}\colon\,  \Xi_{S_\nc}\!\to H_1(\G,M),\ \quad
\tau_{(S^\complement)_\nc}\colon\, \Xi_{(S^\complement)_\nc}\!\to H_1(\G,M),\ \quad
\tau_{\Cyc{\phantom{^\complement}}}\!\!\colon\, \Xi_\Cyc\to H_1(\G,M).
\]
\end{subsec}

\begin{theorem}\label{t:AS-H1-cor}
There are  canonical isomorphisms
\[ A_S(G)\,
\cong\,\im \tau_{S_\nc\phantom{^\complement}}\!/
\big(\im\tau_{S_\nc\phantom{^\complement}}\!\!\!\cap\im \tau_{S^\complement}\hs\big)
\,\cong\,\im \tau_{S_\nc\phantom{^\complement}}\!/
\big(\im \tau_{S_\nc\phantom{^\complement}}\!\!\!\cap
(\im \tau_{(S^\complement)_\nc}\! + \im\tau_{\Cyc{\phantom{^\complement}}}\!\!)\hs\big).\]
\end{theorem}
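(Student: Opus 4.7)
The plan is to reduce Theorem \ref{t:AS-H1-cor} to Theorem \ref{t:AS-H1} by running the same reasoning used in Theorem \ref{p:Ch-complement} and Corollary \ref{c:Sansuc-Ch}, but on the homology side in place of Tate--Nakayama duality. The first step is to identify the image of the map $\ell_1\colon H_1(\G,M[\V_L]_0)\to \Xi_S$ appearing in Theorem \ref{t:AS-H1}. Applying group homology to the short exact sequence $0\to M[\V_L]_0\to M[\V_L]\to M\to 0$, together with Shapiro's lemma in the form $H_i(\G,M[\V_L])\cong \bigoplus_{v\in\V_K}H_i(\G\hm_\bv,M)$, yields an exact sequence whose last piece reads
\[ H_1(\G,M[\V_L]_0)\to \bigoplus_{v\in\V_K} H_1(\G\hm_\bv,M)\ \labelto{\tau}\ H_1(\G,M),\]
where $\tau$ is the total corestriction. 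Hence the image of $H_1(\G,M[\V_L]_0)$ in this direct sum is exactly $\ker\tau$, and projecting onto the $S$-component gives $\im \ell_1=\tau_S^{-1}(\im\tau_{S^\complement})$. Combining this with Theorem \ref{t:AS-H1} and the first isomorphism theorem applied to $\tau_S$, one obtains
\[ A_S(G)\ \cong\ \Xi_S/\tau_S^{-1}(\im\tau_{S^\complement})\ \cong\ \im\tau_S/(\im\tau_S\cap\im\tau_{S^\complement}).\]

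Next I replace $S$ by $S_\nc$. For each $v\in S\smallsetminus S_\nc$ the decomposition group $\G\hm_\bv$ is cyclic, so by the Chebotarev density theorem (exactly as in Corollary \ref{c:Sansuc-Ch}) there is some $v^\complement\in S^\complement$ whose decomposition group coincides with $\G\hm_\bv$, forcing $\im \tau_v\subseteq \im\tau_{S^\complement}$. Summing over such $v$ gives the inclusion $\im\tau_S\subseteq \im\tau_{S_\nc}+\im\tau_{S^\complement}$, and the modular identity (in the lattice of subgroups of $H_1(\G,M)$) then produces the canonical isomorphism
\[ \im\tau_S/(\im\tau_S\cap\im\tau_{S^\complement})\ \isoto\ \im\tau_{S_\nc}/(\im\tau_{S_\nc}\cap\im\tau_{S^\complement}),\]
which is the first isomorphism of the theorem.

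For the second isomorphism, it suffices to prove the equality $\im\tau_{S^\complement}=\im\tau_{(S^\complement)_\nc}+\im\tau_\Cyc$ inside $H_1(\G,M)$. The inclusion $\supseteq$ uses the Chebotarev density theorem once more: every cyclic subgroup $\D\subseteq \G$ occurs as the decomposition group $\G\hm_\bw$ for some $v^\complement\in S^\complement$ (only finitely many places are excluded by $S$), so the corestriction $\tau_\D\colon H_1(\D,M)\to H_1(\G,M)$ factors through $\tau_{v^\complement}$ and hence $\im\tau_\Cyc\subseteq \im\tau_{S^\complement}$. The inclusion $\subseteq$ is tautological: for $v\in (S^\complement)_c$ the group $\G\hm_\bv$ is itself a cyclic subgroup of $\G$, so $\im\tau_v$ is part of $\im\tau_\Cyc$. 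Substituting this equality into the first isomorphism of the theorem gives the second, completing the proof. The only real point to be careful about is the homological bookkeeping with the infinite direct sum indexed by $\V_K$, but this is handled by Shapiro's lemma and its compatibility with direct sums (as already invoked via \cite[Theorem A.1.1]{BK} in the proof of Corollary \ref{c:Gt0}); everything else is a formal manipulation with the Chebotarev density theorem, mirroring the arguments already carried out in Section \ref{s:Ch}.
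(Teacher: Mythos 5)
Your proof is correct, but it takes a route that differs from the paper's in a meaningful way. The paper deduces the first isomorphism from Theorem \ref{t:AS-0-cor}, i.e.\ it stays on the torus side: it uses the $L/K$-free resolution \eqref{e:L/K-free} and the comparison diagram coming from \eqref{e:Y-M'-M} to identify $H_1(\G\hm_w,M)\isoto \Y_{\G\hm_w,\Tors}$ and $H_1(\G,M)\isoto \Y_\Gt$ with $\tau_v$ corresponding to $\pi_v$, and then simply transports the isomorphism $A_S(G)\cong \im\pi_{S_\nc}/(\im\pi_{S_\nc}\cap\im\pi_{S^\complement})$ across this dictionary. You instead start from Theorem \ref{t:AS-H1} and rerun the argument of Theorem \ref{p:Ch-complement} intrinsically on $M=\pia(G)$: the long exact homology sequence of $0\to M[\V_L]_0\to M[\V_L]\to M\to 0$ plus Shapiro's lemma gives $\im l_1=\tau_S^{-1}(\im\tau_{S^\complement})$, hence $A_S(G)\cong\im\tau_S/(\im\tau_S\cap\im\tau_{S^\complement})$, and then Chebotarev together with the modular law (using $\im\tau_{S_\nc}\subseteq\im\tau_S\subseteq\im\tau_{S_\nc}+\im\tau_{S^\complement}$) replaces $S$ by $S_\nc$; the second isomorphism is handled by the same Chebotarev identity $\im\tau_{S^\complement}=\im\tau_{(S^\complement)_\nc}+\im\tau_\Cyc$ as in the paper. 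All the individual steps check out: the identification of the connecting map $\bigoplus_v H_1(\G\hm_w,M)\to H_1(\G,M)$ with the total corestriction is the standard Shapiro compatibility, the modular-law isomorphism $C/(C\cap B)\cong A/(A\cap B)$ for $A\subseteq C\subseteq A+B$ is valid, and the image of a corestriction is insensitive to replacing a decomposition group by a conjugate, so the Chebotarev steps are sound. What your version buys is that it bypasses the resolution at this stage and proves the homological analogue of Theorem \ref{p:Ch-complement} directly for $M$ (though the resolution is of course still hidden in the proof of Theorem \ref{t:AS-H1} that you take as input); what the paper's version buys is that no new computation is needed beyond reading off the diagram, since all the Chebotarev work was already done once and for all for the torus $B$ in Section \ref{s:Ch}.
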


\begin{proof}
For  each $v\in\V_K$, the short exact sequence \eqref{e:Y-M'-M}
gives rise to a commutative diagram with exact rows
\[
\xymatrix@C=12mm{
0=H_1(\G\hm_\bv,M')\ar[r] &H_1(\G\hm_\bv,M)\ar[r]\ar[d]_-{\tau_v} &Y_\Gbvt\ar[d]^-{\pi_v}\ar[r] &(M')_\Gbvt=0 \\
0=H_1(\G,M')\ar[r]       &H_1(\G,M)\ar[r]                                &\Y_\Gt  \ar[r]                      &(M')_\Gt=0
}
\]
In this diagram,  the zeros are explained by Lemma \ref{l:Gt0}.
Now we obtain the first isomorphism of the theorem  from  Theorem \ref{t:AS-0-cor}
giving  isomorphisms
\[A_S(G)\,\isoto\, \Ups_{S_\nc}\hs/\hs \pi_{S_\nc}^{-1}\big(\im \pi_{S^\complement}\big)\, \cong\,
\im \pi_{S_\nc}\hs /\big(\im\pi_{S_\nc}\cap\im \pi_{S^\complement}\hs\big),\]
and from the isomorphisms
\[ H_1(\G\hm_\bv,M)\isoto Y_\Gbvt\hs,\quad\  H_1(\G,M)\isoto\Y_\Gt \]
coming from the  above diagram.
Further, by the Chebotarev density theorem we have
\[ \im\tau_{S^\complement}
=\im \tau_{(S^\complement)_\nc}\! + \im\tau_{\Cyc{\phantom{^\complement}}}\!,\]
which gives the second isomorphism of the theorem.
\end{proof}

\begin{corollary}\label{c:AS-H1-cor}
Assume that there exist a place $v\in S_\nc$ of $K$
and a place $w$ of $L$ over $v$ such that $\G\hm_w=\G$.
Then
\[A_S(G)\cong\coker  \tau_{S^\complement}.\]
\end{corollary}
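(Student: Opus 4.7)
The plan is to deduce this directly from Theorem \ref{t:AS-H1-cor}, using the hypothesis to collapse the intersection appearing in the quotient.

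First, I would observe that when $\G\hm_w = \G$ for some $v \in S_{\rm nc}$, the corresponding summand of $\Xi_{S_\nc}$ is $H_1(\G\hm_w, M) = H_1(\G, M)$, and the corestriction $\tau_v \colon H_1(\G\hm_w, M) \to H_1(\G, M)$ becomes the identity map on $H_1(\G, M)$. (This is immediate from the definition of corestriction for the pair $\G \subseteq \G$, or from the fact that $\Res \circ \Cor$ equals the identity when the subgroup is the full group.) In particular, every element of $H_1(\G, M)$ lies in $\im \tau_v \subseteq \im \tau_{S_\nc}$, so
\[ \im \tau_{S_\nc} = H_1(\G, M). \]

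Next, since $\im \tau_{S^\complement} \subseteq H_1(\G, M) = \im \tau_{S_\nc}$, the intersection simplifies:
\[ \im \tau_{S_\nc} \cap \im \tau_{S^\complement} = \im \tau_{S^\complement}. \]
Substituting into the first isomorphism of Theorem \ref{t:AS-H1-cor} then yields
\[ A_S(G) \cong \im \tau_{S_\nc} / (\im \tau_{S_\nc} \cap \im \tau_{S^\complement}) = H_1(\G, M) / \im \tau_{S^\complement} = \coker \tau_{S^\complement}, \]
which is the desired isomorphism.

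There is no serious obstacle: the corollary is essentially a bookkeeping consequence of Theorem \ref{t:AS-H1-cor}. The only point worth verifying carefully is the claim that $\tau_v$ is the identity when $\G\hm_w = \G$, but this follows immediately from the standard description of corestriction in group homology.
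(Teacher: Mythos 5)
Your proposal is correct and follows exactly the paper's route: the paper's entire proof is the observation that under the hypothesis $\tau_{S_{\rm nc}}$ is surjective, which is precisely your claim that $\im\tau_{S_{\rm nc}}=H_1(\G,M)$ (via the corestriction for $\G\subseteq\G$ being the identity), after which the first isomorphism of Theorem \ref{t:AS-H1-cor} collapses to $\coker\tau_{S^\complement}$. You have merely written out the bookkeeping that the paper leaves implicit.
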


\begin{proof}
Indeed, then the  homomorphism
$\tau_{S_\nc\phantom{^\complement}\!\!}$ is surjective.
\end{proof}

\begin{corollary}\label{c:AS-H1-cor-bis}
Assume that there exist a place $v\in S^\complement$ of $K$
and a place $w$ of $L$ over $v$ such that $\G\hm_w=\G$.
Then
\[A_S(G)=0.\]
\end{corollary}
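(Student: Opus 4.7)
The plan is to deduce this corollary directly from Theorem \ref{t:AS-H1-cor}. That theorem provides the canonical isomorphism
\[ A_S(G) \cong \im \tau_{S_\nc}/\big(\im \tau_{S_\nc} \cap \im \tau_{S^\complement}\big), \]
so it will suffice to show that $\im \tau_{S^\complement} \supseteq \im \tau_{S_\nc}$; in fact I will show the stronger statement $\im \tau_{S^\complement} = H_1(\G,M)$.

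Let $v \in S^\complement$ and $w \mid v$ be the place with $\G_w = \G$ guaranteed by the hypothesis. Then the direct summand $H_1(\G_w, M)$ of $\Xi_{S^\complement}$ is literally equal to $H_1(\G, M)$, and the corestriction homomorphism $\tau_v \colon H_1(\G_w, M) \to H_1(\G, M)$ is the identity map. Hence this single summand surjects onto $H_1(\G, M)$ under $\tau_{S^\complement}$, giving $\im \tau_{S^\complement} = H_1(\G, M)$.

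Since $\im \tau_{S_\nc} \subseteq H_1(\G, M) = \im \tau_{S^\complement}$, the intersection $\im \tau_{S_\nc} \cap \im \tau_{S^\complement}$ equals $\im \tau_{S_\nc}$, and the quotient in Theorem \ref{t:AS-H1-cor} is trivial. This gives $A_S(G) = 0$, as desired. There is no real obstacle here: the proof is essentially an immediate consequence of Theorem \ref{t:AS-H1-cor} once one notices that a decomposition group equal to the full Galois group makes the corresponding corestriction the identity, in perfect parallel to Corollary \ref{c:AS-H1-cor} (where the same observation applied to a place in $S_\nc$ made the corresponding $\tau_v$ surjective onto the target of $\tau_{S_\nc}$).
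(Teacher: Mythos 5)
Your proof is correct and is essentially identical to the paper's (one-line) argument: the paper likewise deduces the corollary from Theorem \ref{t:AS-H1-cor} by noting that the hypothesis makes $\tau_{S^\complement}$ surjective. You have merely spelled out why corestriction along $\G\hm_w=\G$ is the identity, which is a fine amount of extra detail.
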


\begin{proof}
Indeed, then the  homomorphism $\tau_{S^\complement}$ is surjective.
\end{proof}

\section{An example in positive characteristic}
\label{s:example}

We take
$K=\F_p(t)$, the field of rational functions in one variable $t$ over a finite field $\F_p$ where $p$ is a prime.
We assume that $p\equiv -1\pmod{4}$, for instance, $p=3$ or $p=7$,
and we take $L=\F_p\big(\sqrt{t},\sqrt{t^2-1}\,\big)$.
Then $L/K$ is a Galois extension with Galois group $\G=\Gal(L/K)\simeq \Z/2\Z\times\Z/2\Z$.

\begin{lemma}
\label{l:Sawin}
Let $K =\F_p(t)$, $L= \F_p ( \sqrt{t}, \sqrt{t^2-1} )$ where  $p\equiv -1 \bmod 4$.
Then the decomposition groups for $L/K$ at the places corresponding to the irreducible polynomials  $t$ and $t+1$ coincide with $\G$,
and all the other decomposition groups are cyclic.
\end{lemma}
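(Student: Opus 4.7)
The plan is to exploit the fact that $\G = \Gal(L/K) \cong (\Z/2\Z)^2$ has a unique non-cyclic subgroup, namely $\G$ itself. Since $L$ is the compositum of the three quadratic subfields $K(\sqrt{t})$, $K(\sqrt{t^2-1})$, and $K(\sqrt{t(t^2-1)})$, the decomposition group $\G_w$ at a place $w$ of $L$ above $v$ equals $\G$ if and only if none of $t$, $t^2-1$, or $t(t^2-1)$ is a square in $K_v$; in every other case $[L_w:K_v]\le 2$ and $\G_w$ is cyclic. The proof therefore reduces to a case analysis of places of $K = \F_p(t)$.

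First I would verify that $\G_w = \G$ at $v = t$. Here $t$ is a uniformizer, so not a square; $t^2 - 1 \equiv -1 \pmod{t}$ is a unit whose image in $\F_p$ is a non-square since $p \equiv -1 \pmod{4}$, and hence not a square in $K_v$ by Hensel; and $t(t^2-1)$ has odd valuation. The case $v = t+1$ is completely analogous after the substitution $u = t+1$: then $t \equiv -1$ is a non-square unit, while $t^2-1 = u(u-2)$ and $t(t^2-1)$ both have odd valuation.

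Next I would show that at every other place $v$ at least one of $t$, $t^2-1$, $t(t^2-1)$ is a square in $K_v$, which forces $\G_w \ne \G$ and hence $\G_w$ cyclic. At $v = t-1$, the element $t \equiv 1 \pmod{t-1}$ is a unit with square residue, hence a square in $K_v$ by Hensel. At $v = \infty$, with uniformizer $s = 1/t$, one has $t^2 - 1 = s^{-2}(1 - s^2)$, which is a square since the unit $1-s^2$ has residue $1$. At any remaining finite place $v$ (corresponding to a monic irreducible polynomial distinct from $t$, $t-1$, $t+1$), the three elements $t$, $t^2-1$, $t(t^2-1)$ are all units, and since squares form an index-$2$ subgroup of the units of the residue field, among $a$, $b$, $ab$ (with $a = t$, $b = t^2-1$) at least one has square residue; by Hensel it is a square in $K_v$.

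The main content is the case analysis; the only nontrivial input beyond valuation bookkeeping is the use of $p \equiv -1 \pmod 4$ to conclude that $-1$ is a non-square in $\F_p$, which is exactly what forces the residue of $t^2-1$ at $v=t$ (and of $t$ at $v=t+1$) to be a non-square. No single step presents a real obstacle.
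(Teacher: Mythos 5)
Your proof is correct, and it organizes the argument differently from the paper's. The paper first discards all places at which both quadratic subextensions $K(\sqrt{t}\hs)/K$ and $K(\sqrt{t^2-1}\hs)/K$ are unramified, on the grounds that unramified extensions of non-archimedean local fields are cyclic; this leaves only the four places $t$, $t-1$, $t+1$, $\infty$, at each of which it identifies $L_w/K_v$ directly (e.g.\ at $t=0$ as the compositum of a ramified quadratic and an unramified quadratic extension, hence with Galois group the Klein four-group). You instead use the uniform Kummer-theoretic criterion that $\G\hm_w=\G$ if and only if none of $t$, $t^2-1$, $t(t^2-1)$ is a square in $K_v$, and you dispose of the generic places by the observation that among $a$, $b$, $ab$ at least one has square residue. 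Both arguments rest on the same arithmetic input -- that $-1$ is a non-square in $\F_p$ for $p\equiv -1\bmod 4$, together with Hensel's lemma at odd residue characteristic -- so the computations at the four special places are essentially the same; your version trades the ramification-theoretic shortcut for a slightly longer but more self-contained and uniform case analysis that treats every place of $K$ on the same footing.
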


\begin{proof}(compare Sawin \cite{Sawin} in the case $p\equiv 1\pmod {4}$ ).
To find all places of $K$ with noncyclic decomposition subgroup,
it suffices to consider the  places where at least one of the two  extensions
$\F_p(\sqrt{t})/\F(t)$ and $\F_p(t,\sqrt{t^2-1})/\F(t)$
ramify, because  the unramified extensions of non-archimedean local fields are always cyclic.

The first extension ramifies at $0, \infty$.
At $t=0$, the second extension $y^2=t^2-1 $  locally looks like $y^2 = 0-1 = -1$,
which is a field since $q \equiv -1 \bmod 4$.
Thus $L_w/K_v$ is a composite of a ramified quadratic extension and an unramified quadratic extension
of non-archimedean local fields,
and hence the Galois group $\G\hm_w=\Gal(L_w/K_v)$ is isomorphic to the Klein four-group $\G$. Thus $\G\hm_w=\G$.
 We can write  the second extension as $\left(\frac{y}{t}\right)^2= 1- \frac{1}{t^2}$,
which at $t=\infty$ locally looks like $\left(\frac{y}{t}\right)^2 = 1- \frac{1}{\infty^2}  =1$, which is split.
Thus $L_w/K_v$ is a quadratic extension, and therefore $\G\hm_w=\Gal(L_w/K_v)$ is a cyclic group of order 2.

The second extension ramifies at $t=1$, where the first extension locally looks
like $y^2=1$, hence splits, and therefore $\G\hm_w$ is a cyclic group of order 2.
Moreover, it ramifies at $t=-1$, where the first extension looks like  $y^2=-1$,
hence is a field, because $p\equiv -1\bmod 4$. As above, we see that $\G\hm_w=\G$ in this case.

We see that  $\G\hm_w=\G$ at the places corresponding to the irreducible polynomials  $t$ and $t+1$,
and $G_w$ is cyclic for all other places $v $ of $K$, as desired.
\end{proof}

\begin{proposition}
Let  $T=R^1_{L/K}\Gm\coloneqq \ker\big[R_{L/K}\Gm\to\Gm\big]$ be the norm 1 torus,
where $L/K=\F_p\big(\sqrt{t},\sqrt{t^2-1}\,\big)/\F_p(t)$ with $p\equiv -1\pmod{4}$.
Let $S$ be a finite set of places of $K$.
 If $S$ contains both places of $K=\F_p(t)$ corresponding to the polynomials
$t$ and $t+1$, then $A_S(T)\cong \Z/2\Z$.
Otherwise we have $A_S(T)=0$.
\end{proposition}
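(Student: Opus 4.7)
The plan is to apply Theorem~\ref{t:AS-H1-cor} together with the information provided by Lemma~\ref{l:Sawin}. Since $T=R^1_{L/K}\Gm$ is a torus, the algebraic fundamental group is $M\coloneqq\pia(T)=\X_*(T)$, which is identified with the augmentation ideal $I_\G\subset\Z[\G]$ via the short exact sequence
\[
0\to I_\G\to\Z[\G]\to\Z\to 0
\]
of $\G$-modules arising from $1\to T\to R_{L/K}\Gm\to\Gm\to 1$.

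Next I would compute the relevant group homology. For any subgroup $\D\subseteq\G$, the module $\Z[\G]$ is free as a $\Z[\D]$-module (by choosing coset representatives for $\G/\D$), so the long exact homology sequence associated with the displayed resolution produces a canonical isomorphism $H_1(\D,I_\G)\cong H_2(\D,\Z)$. For the Klein four group $\G\cong\Z/2\Z\times\Z/2\Z$ the Schur multiplier $H_2(\G,\Z)$ equals $\Z/2\Z$, while for every cyclic subgroup $\D\subseteq\G$ the group $H_2(\D,\Z)$ vanishes. Consequently, $H_1(\G\hm_w,M)=0$ for every place $v$ whose decomposition group $\G\hm_w$ is cyclic, and $H_1(\G\hm_w,M)\cong\Z/2\Z$ when $\G\hm_w=\G$, in which case the corestriction $\tau_v\colon H_1(\G\hm_w,M)\to H_1(\G,M)$ is the identity of $\Z/2\Z$.

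By Lemma~\ref{l:Sawin}, the only places of $K$ with non-cyclic decomposition group in $\G$ are those corresponding to the irreducible polynomials $t$ and $t+1$, and at both places the decomposition group equals all of $\G$. If $S$ contains both of these places, then $S^\complement$ contains no non-cyclic place, so $\Xi_{S^\complement}=0$, and Corollary~\ref{c:AS-H1-cor} applied to either $v\in S_\nc$ with $\G\hm_w=\G$ gives
\[
A_S(T)\cong\coker\tau_{S^\complement}=H_1(\G,M)\cong\Z/2\Z.
\]
If instead $S$ omits at least one of $t,t+1$, then $S^\complement$ contains a place $v$ with $\G\hm_w=\G$, and Corollary~\ref{c:AS-H1-cor-bis} immediately gives $A_S(T)=0$.

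The main technical point is the identification $H_1(\G,I_\G)\cong H_2(\G,\Z)$ combined with the computation of the Schur multiplier of the Klein four group; once this is in place, Lemma~\ref{l:Sawin} together with Corollaries~\ref{c:AS-H1-cor} and~\ref{c:AS-H1-cor-bis} reduces the proposition to a routine case split.
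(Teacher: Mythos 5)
Your proposal is correct and follows essentially the same route as the paper: both identify $\X_*(T)$ with the augmentation ideal, use the short exact sequence $0\to M\to\Z[\G]\to\Z\to 0$ to reduce $H_1(\D,M)$ to the Schur multiplier $H_2(\D,\Z)$ (the paper writes this in Tate-cohomology notation as $H^{-2}(\D,M)\cong H^{-3}(\D,\Z)$), invoke Schur's computation $H_2(\Z/2\times\Z/2,\Z)\cong\Z/2\Z$ and the vanishing for cyclic subgroups, and then conclude via Lemma~\ref{l:Sawin} and Corollaries~\ref{c:AS-H1-cor} and~\ref{c:AS-H1-cor-bis}. The only difference is notational (homology versus Tate cohomology), so there is nothing substantive to add.
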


\begin{proof}
This is similar to \cite[Example 5.6]{Sansuc} in the number field case.
Consider the  algebraic fundamental group
\[M=\pi_1^\alg(T)=\X_*(T)=\ker\big[\hs\Z[\G]\to\Z\hs\big].\]
We wish to compute $H^{-2}(\G,M)$ and $H^{-2}(\G\hm_w,M)$.
From the short exact sequence
\[0\to M\to \Z[\G]\to\Z\to 0\]
we obtain an exact sequence of Tate cohomology groups
\[0=H^{-3}(\G,\Z[G])\to H^{-3}(\G,\Z)\to H^{-2}(\G,M)\to H^{-2}(\G,\Z[\G])=0.\]
Thus
\[H^{-2}(\G,M)\cong H^{-3}(\G,\Z)=H_2(\G,\Z).\]
Similarly we obtain that
\[H^{-2}(\G\hm_w,M)\cong H^{-3}(\G\hm_w,\Z)=H_2(\G\hm_w,\Z).\]
Now  $H_2(\G,\Z)$ is the Schur multiplier of $\G$, and by a theorem of Schur we have
\[H_2(\G,\Z)\cong\Z/2\Z;\]
see \cite[Corollary 2.2.12]{Karpilovsky}.
Thus $H^{-2}(\G,M)\cong\Z/2\Z$.

If $\G\hm_w=\G$, then, of course,
\[H^{-2}(\G\hm_w,M)=H^{-2}(\G,M)\cong\Z/2\Z.\]
If $\G\hm_w$ is cyclic, then we have
\[H^{-2}(\G\hm_w,M)\cong H^{-3}(\G\hm_w,\Z)\cong H^1(\G\hm_w,\Z)=\Hom(\G\hm_w,\Z)=0,\]
where $H^{-3}(\G\hm_w,\Z)\cong H^1(\G\hm_w,\Z)$ by periodicity; see \cite[Section IV.8, Theorem 5]{CF}.

Now the proposition follows  from  Corollaries \ref{c:AS-H1-cor} and \ref{c:AS-H1-cor-bis}.
\end{proof}

\appendix

\section{Existence of an $L/K$-free resolution of a reductive group}
\label{app:CT}

\begin{center}
{\em Jean-Louis Colliot-Th\'el\`ene}
\end{center}
\medskip

In this appendix we give an alternative proof of Proposition \ref{p:E-L/K-free}.

\begin{proof}
Let $G$ be a reductive group over a field $K$. Assume that there exists a maximal $K$-torus $T\subset G$
which is split by a Galois extension $L/K$ with Galois group $\Gamma$.
This implies that $G^{\text{tor}}$ and the centre of $G^{\text{sc}}$, as groups of multiplicative type, are split by $L/K$.
From this, according to Remark 3.1.1 in \cite{CT-RF}, if one follows the proof of Proposition-D\'efinition 3.1, then, given $G$, one may find a flasque resolution
\[
1 \to S \to H \to G \to 1
\]
such that the $K$-tori $S$ and $P$ are split by $L$.

We now follow the notation in the proof of Proposition-D\'efinition 3.1 in \cite{CT-RF}
and we use the diagram constructed on page 89.
The $K$-torus $Z$ in this proof is split by $L/K$.
In this proof, one may choose for $Q$  a $K$-torus split by $L/K$,
with character group a free $\mathbb{Z}[\Gamma]$-module.

Let $G^{\prime} := G^{\text{sc}} \times Q$. This is an $L/K$-free group; see Definition \ref{d:L/K-free-group}.

As on  page 89 of \cite{CT-RF},
consider the exact sequence
\begin{equation}\label{e:1}
1 \to G^{\text{sc}} \times Q \to H \to P \to 1,
\end{equation}
that is,
\[
1 \to G^{\prime} \to H \to P \to 1.
\]
Let
\begin{equation}\label{e:2}
1 \to B \to R \to P \to 1
\end{equation}
be an exact sequence of $K$-tori split by $L/K$ such that
the character group $\X^*(R)$ of $R$ is a free $\mathbb{Z}[\Gamma]$-module.

Now pull back \eqref{e:1} via \eqref{e:2}.
One gets an exact sequence
\begin{equation}\label{e:3}
1 \to G^{\prime} \to H_{1} \to R \to 1
\end{equation}
and an exact sequence
\[
1 \to B \to H_{1} \to H \to 1.
\]
The kernel of the composite map $H_{1} \to H \to G$ is an extension of the (flasque) $K$-torus $S$ by the $K$-torus $B$, hence is a $K$-torus $C$ split by $L/K$.

One then has the exact sequence
\begin{equation}\label{e:4}
1 \to C \to H_{1} \to G \to 1
\end{equation}
with $C$ a $K$-torus split by $L/K$, and the exact sequence
\[
1 \to G^{\prime} \to H_{1} \to R \to 1,
\]
that is,
\[
1 \to (G^{\text{sc}} \times Q) \to H_{1} \to R \to 1
\]
with $\X^*(R)$ a free $\mathbb{Z}[\Gamma]$-module. One then has an exact sequence
\[
1 \to G^{\text{sc}} \to H_{1} \to M \to 1
\]
where $M$ is an extension of $R$ by $Q$ and hence is a $K$-torus split by $L/K$ whose character group is a free $\mathbb{Z}[\Gamma]$-module. So $H_{1}$ is an $L/K$-free group, see Definition \ref{d:L/K-free-group},
and \eqref{e:4} is a desired $L/K$-free resolution of $G$, see Definition \ref{d:L/K-free}.
\end{proof}

%\begin{comment}

\section{A Magma program}
\label{app:Magma}

In the preprint version, we provide a computer program computing $A_S(G)$
assuming that we know the effective Galois  group $\G$ acting on $M=\pia(G)$,
all {\em non-cyclic}  decomposition groups (up to conjugacy) $\G\hm_w\subset \G$
for  $v\in S$ and $w\in\V_L$ over $v$,
and all {\em non-cyclic} decomposition groups $\G\hm_w$ for $v\in S^\complement$.
We use Theorem \ref{t:AS-H1-cor}.
\bigskip\bigskip

\lstinputlisting{WA1-5-forPaper.m}

%\end{comment}

\def\sk{\smallskip }

\end{document}